\newcounter{theorems}
\theoremstyle{plain}
\newcounter{lemma}
\numberwithin{equation}{section}
\newtheoremstyle{par}%
     {\topsep}%
     {\topsep}%
     {\itshape}%
     {}%
     {\bfseries}%
     {}%
     {.5em}%
     {}%
\newtheoremstyle{parrm}%
     {\topsep}%
     {\topsep}%
     {\normalfont}%
     {}%
     {\itshape}%
     {}%
     {.5em}%
     {}%
\theoremstyle{plain}
\numberwithin{equation}{section}
\newtheorem{lemma}[equation]{Lemma}
\newtheorem{theo}[equation]{Theorem}
\theoremstyle{definition}
\newtheorem{defi}[equation]{Definition}
\newtheorem{example}[equation]{Example}
\theoremstyle{remark}
\newtheorem{remark}[equation]{Remark}
\theoremstyle{par}
\newtheorem{propo}[equation]{}
\theoremstyle{parrm}
\def\tagform@#1{\maketag@@@{\ignorespaces#1\unskip\@@italiccorr}}
\newcommand{\RR}{\mathbb{R}}
\newcommand{\from}{\colon}
\newcommand{\simbolovettore}[1]{{\boldsymbol{#1}}}
\newcommand{\vc}{\simbolovettore{c}}
\newcommand{\vm}{\simbolovettore{m}}
\newcommand{\vn}{\simbolovettore{n}}
\newcommand{\vq}{\simbolovettore{q}}
\newcommand{\vx}{\simbolovettore{x}}
\newcommand{\vQ}{\simbolovettore{Q}}
\newcommand{\vL}{\simbolovettore{L}}
\newcommand{\vY}{\simbolovettore{Y}}
\newcommand{\zero}{\boldsymbol{0}}
\newcommand{\abs}[1]{\lvert{#1}\rvert}
\newcommand{\tT}{\mkern-1.5mu\mathsf{T}}
\newcommand{\pfaff}{\operatorname{Pf}}
\newcommand{\bordered}[1]{{#1}^{\mathsf{b}}}
\newcommand{\term}[1]{\emph{#1}}
\newcommand{\norm}[1]{\lVert{#1}\rVert}
\newcommand{\conf}[2]{\mathbb{F}_{#1}(#2)}
\newcommand{\CH}{\operatorname{CH}}
\begin{document}
\pagenumbering{arabic}

\title{%
Pfaffians and the inverse
problem for collinear central configurations 
}

\author{D.L.~Ferrario}

\date{%
\today}
\maketitle

\begin{abstract}
We consider, after Albouy--Moeckel, the inverse problem for col\-li\-ne\-ar central
configurations: given a collinear configuration of $n$ bodies, find  
positive
masses which make it central.  We 
give some new estimates concerning the
positivity of Albouy-Moeckel pfaffians: we show that for any homogeneity
$\alpha$ and $n\leq 6$ or $n\leq 10$ and $\alpha=1$ (computer-assisted) the
pfaffians are positive.  Moreover,
for the inverse problem with  positive masses, we show that for any
homogeneity and $n\geq 4$ there are explicit regions of the configuration space
without solutions of the inverse problem.

\noindent {\em Keywords\/}: $n$-body problem; pfaffian; central configuration; inverse 
problem. 
\end{abstract}

\section{Introduction}

Let $n\geq 2$, and $d\geq 1$. The \emph{configuration} space of $n$ points in 
the $d$-dimensional euclidean space $E=\RR^d$ is defined as
\[
\conf{n}{E} = \{ \vq \in E^n : \vq_i \neq \vq_j \},
\]
where $\vq = (\vq_1, \vq_2, \ldots, \vq_n)\in E$ and $\forall j, \vq_j\in E$.
Given a positive parameter $\alpha>0$, and $n$ positive masses $m_j>0$, 
the \emph{potential function} $U\from \conf{n}{E} \to \RR$ is defined 
as 
\[
U(\vq) = \sum_{1\leq i < j \leq n} \dfrac{m_im_j}{\norm{\vq_i - \vq_j}^\alpha}.
\]
A \emph{central configuration} is a configuration 
that yields a relative equilibrium solution 
of the Newton equations of the $n$-body problem with potential function $U$, 
and can be shown  (cf. 
\cite{Moultonstraightlinesolutions1910},
\cite{moeckelCentralConfigurations1990},
\cite{AlbouyInverseProblemCollinear2000},
\cite{FerrarioFixedpointindices2015},
\cite{ferrarioCentralConfigurationsMorse2017},
\cite{ferrarioCentralConfigurationsMutual2017})
that it is a solution of the following $n$ equations
\begin{equation}
\lambda m_j \vq_j = - \alpha \sum_{k\neq j} m_j m_k \dfrac{\vq_j - \vq_k}{\norm{\vq_j - \vq_k}^{\alpha+2}}.
\end{equation}
Such configurations have center of mass $\sum_{j=1}^n m_j \vq_j = \zero\in E$, 
and the parameter $\lambda$ turns out to be equal to 
\(
\lambda = -\alpha \dfrac{U(\vq)}{\sum_{j=1}^n m_j \norm{\vq_j}^2 }.
\)
A generic central configuration  (with center of mass 
\(
\vq_0 = \dfrac{\sum_{j=1}^n m_j \vq_j}{M}
\)
not necessarily $\zero$,
where $M=\sum_{j=1}^n m_j$)
satisfies the equation 
\begin{equation}
\label{eq:CC}
\lambda m_j (\vq_j-\vq_0) = - \alpha \sum_{k\neq j} m_j m_k \dfrac{\vq_j - \vq_k}{\norm{\vq_j - \vq_k}^{\alpha+2}}.
\end{equation}

Now, if for each $i,j$ denote  
\[
\vQ_{jk} = \dfrac{\vq_j - \vq_k}{\norm{\vq_j - \vq_k}^{\alpha+2}},
\]
equation \eqref{eq:CC}
can be written as
\footnote{%
In the notation of \cite{AlbouyInverseProblemCollinear2000}, 
$\vq_j = X_j$, $\vq_0=c$,   
$A_j=\sum_{k\neq j} m_k \vQ_{ki}$, 
so that the equation \eqref{eq:CC} reads as equation (3) of  \cite{AlbouyInverseProblemCollinear2000}
$\alpha A_j - \lambda(\vq_j-\vq_0) =\zero$, $j=1,\ldots, n$, 
for some constant $\lambda<0$. 
}
\begin{equation}
\label{eq:CC2}
\vq_j = M^{-1} \sum_{k=1}^n m_k \vq_k - \dfrac{\alpha}{\lambda} \sum_{k\neq j} m_k \vQ_{jk} ,
\quad j=1,\ldots, n.
\end{equation}

The \emph{inverse problem}, introduced by Moulton
\cite{Moultonstraightlinesolutions1910} (see also
Buchanan
\cite{Buchanancertaindeterminantsconnected1909}), and considered by Albouy and Moeckel
in \cite{AlbouyInverseProblemCollinear2000}, 
can be phrased as follows: given the positions $\vq_j$
(or, equivalently, the mutual differences $\vq_i-\vq_j$)
to find the (positive) masses $m_j$ and $\lambda<0$ such that \eqref{eq:CC2} holds.
As it is, the equation is not linear in the $(n+1)$-tuple  $(m_1,\ldots, m_n,\lambda)$, 
but can be transformed into the following equation
\begin{equation}
\label{eq:CC3}
\vq_j =  \hat\vc + \sum_{k\neq j} \hat m_k \vQ_{jk} ,
\quad j=1,\ldots, n,
\end{equation}
because of the following lemma. 

\begin{lemma}
\label{eq:CC3-CC2}
Given $\vq\in \conf{n}{E}$, there exists $(m_1,\ldots, m_n,\lambda)$,
with $m_j>0$ satisfying
$\eqref{eq:CC2}$ if and only if there exists
$(\hat m_1,\ldots, \hat m_n, \hat \vc) \in \RR^{n+d}$ such that
\eqref{eq:CC3} holds 
and $\hat m_j >0 $ for each $j$. 
\end{lemma}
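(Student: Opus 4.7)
The plan is to exhibit an explicit invertible change of variables between the unknown tuple $(m_1,\ldots,m_n,\lambda)$ of \eqref{eq:CC2} and the unknown tuple $(\hat m_1,\ldots,\hat m_n,\hat\vc)$ of \eqref{eq:CC3}, and to check that positivity is preserved in both directions. The natural guess, comparing the two equations term by term, is
\[
\hat m_k = -\frac{\alpha}{\lambda}\, m_k, \qquad \hat \vc = \vq_0 = M^{-1}\sum_{k=1}^n m_k \vq_k.
\]
Since $\alpha>0$, the scalar $-\alpha/\lambda$ is positive exactly when $\lambda<0$, so positivity of $m_k$ and negativity of $\lambda$ correspond precisely to positivity of $\hat m_k$.

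First I would do the forward direction: starting from \eqref{eq:CC2} with $m_k>0$ and $\lambda<0$, I just substitute the displayed definitions of $\hat m_k$ and $\hat\vc$ and read off \eqref{eq:CC3}. This is a one-line computation.

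The backward direction is the step that requires the only real idea, because a priori $\hat\vc$ in \eqref{eq:CC3} is an arbitrary vector in $E=\RR^d$, whereas in \eqref{eq:CC2} the vector $\vq_0$ is not free: it is forced to be the $\hat m$-weighted center of mass of the $\vq_j$. I would resolve this by multiplying \eqref{eq:CC3} by $\hat m_j$ and summing over $j$: the double sum $\sum_{j}\sum_{k\neq j}\hat m_j\hat m_k\vQ_{jk}$ is antisymmetric in $(j,k)$ since $\vQ_{jk}=-\vQ_{kj}$, hence vanishes, and one obtains $\sum_j \hat m_j \vq_j = \hat M\,\hat\vc$, where $\hat M=\sum_j\hat m_j>0$. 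Thus $\hat\vc$ is automatically the $\hat m$-barycenter of $\vq$, matching the role of $\vq_0$ in \eqref{eq:CC2}.

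With this identity in hand, one chooses any $c>0$, sets $m_k := c\,\hat m_k>0$ and $\lambda := -\alpha/c<0$; then $M=c\hat M$ and $M^{-1}\sum m_k\vq_k = \hat M^{-1}\sum \hat m_k\vq_k = \hat\vc$, so \eqref{eq:CC3} translates back to \eqref{eq:CC2}. The main (small) obstacle is exactly the antisymmetry observation: it is what tells us that the $d$ extra components of $\hat\vc$ do not represent genuinely extra freedom but are constrained to equal the $\hat m$-weighted centroid, and it is also what ensures that the scaling ambiguity $c>0$ in passing back to \eqref{eq:CC2} is consistent and harmless.
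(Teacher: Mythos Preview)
Your proof is correct and follows essentially the same route as the paper: the forward direction is the same direct substitution, and for the converse both you and the paper multiply \eqref{eq:CC3} by the masses, sum, and use the antisymmetry $\vQ_{jk}=-\vQ_{kj}$ to force $\hat\vc$ to be the $\hat m$-barycenter (the paper simply takes your scaling constant $c=1$, i.e.\ $m_k=\hat m_k$, $\lambda=-\alpha$). The only point to be careful about is that in the forward direction you invoke $\lambda<0$ as if it were part of the hypothesis; as in the paper, this is not assumed but is a consequence of \eqref{eq:CC2} with positive masses (via the identity $\lambda=-\alpha\,U(\vq)/\sum_j m_j\|\vq_j-\vq_0\|^2$, derived just before and just after the lemma).
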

\begin{proof}
If \eqref{eq:CC2} holds for $(m_1,\ldots, m_n,\lambda)$ with positive masses, 
then $\lambda<0$ and simply by setting
\(
\hat \vc  = 
M^{-1} \sum_{k=1}^n m_k \vq_k ~, \quad  
\hat m_k  = - \dfrac{\alpha}{\lambda}  m_k 
\)
one has that \eqref{eq:CC3} holds. 

Conversely, assume that $(\hat m_1,\ldots, \hat m_n,\hat \vc)$ satisfies
\eqref{eq:CC3}, with $\hat m_j>0$. Then by putting
\(
m_k  = \hat m_k, \quad k=1,\ldots, n ~,\quad
\lambda  = -\alpha %
\)
it follows, multiplying by $m_j$ (and setting as above $M=\sum_{j=1}^n m_j$) 
and summing for $j=1,\ldots, n$
\[
\vq_j  =  \hat\vc -\dfrac{\alpha}{\lambda} \sum_{k\neq j} m_k \vQ_{jk}, \quad
\implies \quad
\sum_{j=1}^n m_j \vq_j  = M \hat \vc + \zero, 
\]
and hence 
\eqref{eq:CC2}.
\end{proof}

\begin{remark}
Multiplying each equation by $\hat m_j (\vq_j - \hat\vc)$, and summing for $j=1,\ldots, n$, it follows that
\(
\sum_{j=1}^n \hat m_j \norm{\vq_j-\hat \vc}^2 %
= \sum_{j=1}^n \sum_{k\neq j} \hat m_j \hat m_k \vQ_{jk} \cdot (\vq_j - \hat \vc) %
 = 
\sum_{1\leq j < k \leq n}\hat m_j \hat m_k \norm{\vq_j -\vq_k}^{-\alpha}. %
\)
Hence whenever \eqref{eq:CC2} or \eqref{eq:CC3} holds (for positive masses), 
the corresponding $\lambda$ is in any case negative.
Moreover, \eqref{eq:CC2} holds for $(m_1,\ldots, m_n,\lambda)$ if and only if 
it holds for $(t m_1,\ldots, t m_n,t\lambda)$ for any $t>0$, so that equations \eqref{eq:CC2}
and \eqref{eq:CC3} are %
equivalent.  
\end{remark}

\begin{defi}
For each $\vq\in \conf{n}{E}$, let 
$\Psi(\vq), \tilde\Psi(\vq) \subset E^n$ be the subsets
\[
\begin{aligned}
\Psi(\vq) & = \{ \vq : \vq_j =   
\hat\vc + \sum_{k\neq j} \hat m_k \vQ_{jk} : 
\hat \vc \in E, \hat m_j >0, j=1,\ldots, n
\} \\
\subseteq
\tilde \Psi(\vq) &  = \{   \vq : \vq_j = 
\hat\vc + \sum_{k\neq j} \hat m_k \vQ_{jk} : 
\hat \vc \in E, \hat m_j \in \RR, j=1,\ldots, n
\}.
\end{aligned}
\] 
Hence, given $\vq\in \conf{n}{E}$, there exists a solution of \eqref{eq:CC3} if and only 
if $\vq \in \Psi(\vq)$; furthermore, if $\vq\in \Psi(\vq)$ then
  $\vq\in \tilde\Psi(\vq)$. 
\end{defi}

We will now deal with the collinear case. First, we will follow Albouy--Moeckel 
\cite{AlbouyInverseProblemCollinear2000}
and consider the inverse problem with real masses; 
then we will consider the problem with positive masses, and 
follow Ouyang--Xie \cite{OuyangCollinearCentralConfiguration2005} (for $n=4$ 
bodies and $\alpha=1$) and Davis et al. \cite{DavisInverseproblemcentral2018a} (for $n=5$
bodies and $\alpha=1$),
in understanding in which regions the inverse problem has no solutions.

\section{The case $d=1$: collinear configurations and Pfaffians}

For $d=1$, all configurations are on a line, therefore
 $E=\RR$, $c=\vc$, and $\vq \in \tilde\Psi(\vq)$ if and only if 
there exists 
$(m_1,...,m_n,c) \in \RR^{n+1}$ such that
\begin{equation}
\label{eq:d1}
 \begin{bmatrix}
 0 & Q_{12} & Q_{13} & \ldots & Q_{1n} \\
 -Q_{12} & 0 & Q_{23} & \ldots & Q_{2n} \\
 \vdots & \vdots & \vdots &\ddots & \vdots \\
 -Q_{1n} & -Q_{2n} & \ldots & -Q_{n-1,n} & 0
 \end{bmatrix}
 \begin{bmatrix}
 m_1 \\ m_2 \\ \vdots \\ m_n
 \end{bmatrix}
 +
 c 
 \begin{bmatrix}
 1\\1\\\vdots\\1
 \end{bmatrix}
 =
 \begin{bmatrix}
 q_1\\q_2\\\vdots\\q_n
 \end{bmatrix}
\end{equation}
where $Q_{ij} =(q_i - q_j)\abs{q_i - q_j}^{-\alpha-2}$, 
for $i,j=1,\ldots, n$, or  
\begin{equation}\label{eq:main1d}
\iff 
Q \vm  + c \vL = \vq
\end{equation}
where $Q$ is the $n\times n$ skew-symmetric matrix with entries $Q_{ij}$,
$\vm$ the vector of masses, and $\vL$ the vector with constant
components $1$.

Recall %
that if $n$ is odd and $A$ is an anti-symmetric  
$n\times n$ matrix $A^{\tT} = - A \implies \det(A) = \det(-A) = (-1)^n \det(A) \implies \det(A) = 0$.
If $n$ is even, then
$\det A  = \left( 
\pfaff A
\right)^2$
(cf. for example the combinatorial approach of \cite{godsilAlgebraicCombinatorics1993}, Chap. 7, 
or the multi-linear algebra approach of \cite{northcottMultilinearAlgebra1984}, from page 100).
The \term{pfaffian}  $\pfaff Q$  of a skew-symmetric matrix $Q$ (for even $n$) is defined as follows
(in Moulton's 1910 notation): 
\setlength{\arrayrulewidth}{1pt}
\[
\pfaff Q = 
\begin{array}{cccc|}
\multicolumn{1}{|c}{Q_{12}} & Q_{13} & \cdots &  Q_{1n}\\
 & Q_{23} & \cdots & Q_{2n} \\
 & & \ddots & \vdots \\
 && & Q_{n-1,n}
\end{array}
=
\sum_{\sigma} (-1)^\sigma  Q_{r_1,s_1} Q_{r_2,s_2} \ldots Q_{r_k,s_k}
\]
where $n=2k$, and the permutation  
$\sigma$ runs over all \emph{perfect matchings} of $\vn=\{1,\ldots, n=2k\}$:
a perfect matching $\sigma$ is a fixpoint free involution of $\vn$,
which can be represented also as 
a partition of $\vn$ in pairs $[r_1,s_1, r_2,s_2,\ldots, r_k,s_k]$. 
The sign $(-1)^\sigma$ is the parity of this permutation.
In D. Knuth and Cayley notation 
\cite{knuthOverlappingPfaffians1996,cayley1849determinants}
\( \pfaff A = A[1,2,\ldots, n]  \).

The following recursive identity is the analogue of the Laplace expansion for the determinant: 
\begin{equation}\label{eq:pfaff1}
A[1,2,\ldots,n]=\sum_{j=1}^{n-1} (-1)^{j+1} A_{jn} 
 A[1,\ldots, \hat j, \ldots, \hat n],
\end{equation}
where $A[1,\ldots, \hat j, \ldots, \hat n]$ denotes the Pfaffian of the matrix
 with the $j$-th and $n$-th rows and columns canceled out.

An elementary property of Pfaffians is the following: if $A$ is a skew-symmetric matrix,
and $B$ the matrix obtained by swapping the $i$-th and $j$-th columns and the $i$-th 
and $j$-th rows, then 
\begin{equation}
\label{eq:AminusB}
\pfaff A = - \pfaff B. 
\end{equation}

\begin{lemma}[Halton]
\label{lemma:halton}
Let $A$ be an $n\times n$ skew-symmetric matrix, and $i<j$, with $n$ even.  
If $A_{ij}$ denotes the matrix $A$ with row $i$ and column $j$ removed,  then
\begin{equation}\label{eq:halton}
\det A_{ij} = - 
 A[1,\ldots, \hat i, \ldots, \hat j, \ldots, n] \, \pfaff A.
\end{equation}
\end{lemma}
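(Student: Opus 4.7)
The plan is to deduce \eqref{eq:halton} from the basic identity $\det A = (\pfaff A)^2$ by differentiating both sides with respect to a single independent entry $A_{ij}$ of the skew-symmetric matrix $A$ (with $i<j$, so that $A_{ij}$ is a genuine coordinate and $A_{ji}=-A_{ij}$).

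First I would compute $\partial_{A_{ij}}\pfaff A$ directly from the perfect-matching definition. Since $\pfaff A$ is linear in each $A_{rs}$ with $r<s$, grouping the matchings that contain the pair $\{i,j\}$ with the matchings they induce on the remaining indices yields
\[
\partial_{A_{ij}}\pfaff A \;=\; (-1)^{i+j+1}\,A[1,\ldots,\hat i,\ldots,\hat j,\ldots,n],
\]
the sign being the parity of the permutation needed to move the pair $(i,j)$ to the front of the matching sequence. To pin it down cleanly: the case $j=n$ is precisely the statement of \eqref{eq:pfaff1}, and the general case follows by successively applying the swap rule \eqref{eq:AminusB} to bring column/row $j$ into position $n$ and then column/row $i$ into position $n-1$.

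Next I would compute $\partial_{A_{ij}}\det A$ by viewing $\det$ as a polynomial in the $n^2$ unconstrained entries and applying the chain rule under the constraint $A_{ji}=-A_{ij}$: the derivative equals $C_{ij}-C_{ji}$, where $C_{pq}=(-1)^{p+q}\det A_{pq}$ is the ordinary $(p,q)$-cofactor. A short observation then shows $\det A_{ji}=-\det A_{ij}$: the transpose of $A_{ji}$ is obtained from $A^{\tT}=-A$ by removing row $i$ and column $j$, hence it equals $-A_{ij}$, whose determinant picks up a factor $(-1)^{n-1}=-1$ since $n$ is even. Therefore $C_{ji}=-C_{ij}$ and
\[
\partial_{A_{ij}}\det A \;=\; 2C_{ij} \;=\; 2(-1)^{i+j}\det A_{ij}.
\]

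Differentiating $\det A=(\pfaff A)^2$ with respect to $A_{ij}$ and equating the two computations gives
\[
2(-1)^{i+j}\det A_{ij} \;=\; 2(-1)^{i+j+1}\,\pfaff(A)\;A[1,\ldots,\hat i,\ldots,\hat j,\ldots,n],
\]
which after cancellation is precisely \eqref{eq:halton}. The only real obstacle is the sign bookkeeping in the Pfaffian derivative; everything else is a formal consequence of $\det=\pfaff^2$ and the skew-symmetry of $A$.
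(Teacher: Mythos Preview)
Your argument is correct. Differentiating the polynomial identity $\det A=(\pfaff A)^2$ with respect to the independent upper-triangular entry $A_{ij}$ is a legitimate move, and your two derivative computations are right: the Pfaffian derivative $\partial_{A_{ij}}\pfaff A=(-1)^{i+j+1}A[1,\ldots,\hat i,\ldots,\hat j,\ldots,n]$ matches \eqref{eq:pfaff1} when $j=n$ (recall $n$ is even, so $(-1)^{i+n+1}=(-1)^{i+1}$) and follows in general by the swap rule \eqref{eq:AminusB}; and the determinant side gives $2(-1)^{i+j}\det A_{ij}$ via the chain rule and the observation $\det A_{ji}=-\det A_{ij}$, which is just the skew-symmetry of the adjugate of an even-sized skew-symmetric matrix. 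Cancelling $2(-1)^{i+j}$ yields \eqref{eq:halton} exactly.

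The paper itself does not supply a proof: it simply cites Lemma~3.2 of Godsil's \emph{Algebraic Combinatorics}, whose treatment is combinatorial (matchings and sign-reversing involutions). Your analytic/differentiation route is genuinely different and arguably more economical here, since it leverages $\det=(\pfaff)^2$ as a black box rather than unpacking the matching definition twice. The combinatorial proof, on the other hand, does not presuppose $\det=(\pfaff)^2$ and in fact can be used to derive it together with the Laplace-type expansion \eqref{eq:pfaff1}; so the two approaches trade a stronger hypothesis for a shorter argument.
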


\begin{remark}
See for example lemma 3.2 at page 118 of \cite{godsilAlgebraicCombinatorics1993}, for a proof,
where it is used to prove the recursive relation of Pfaffians. 
See also 
\cite{stembridgeNonintersectingPathsPfaffians1990a},
\cite{DressSimpleProofIdentity1995}, 
\cite{HamelPfaffianIdentitiesCombinatorial2001} for other interesting combinatorial identities 
for pfaffians. 
\end{remark}

\begin{remark}[Buchanan Albouy--Moeckel Conjecture]
Buchanan%
, in his 1909 article \cite{Buchanancertaindeterminantsconnected1909},
proves a proposition which can be rephrased as follows:
\emph{for each even $n$, $\alpha=1$, 
for each $\vq\in \conf{n}{\RR}$, the Pfaffian is non-zero: $\pfaff A_n \neq 0$.}

As found by Albouy and Moeckel in \cite{AlbouyInverseProblemCollinear2000}, 
Buchanan's proof uses an incorrect argument, and cannot be repaired. So, they
conjecture it to be true, in the 
\emph{Albouy--Moeckel Conjecture}: the Pfaffians are non-zero for all %
configurations. 
The partial steps done in the direction of its complete proof are the following:
it is true for $n\leq 4$ and $\alpha>0$, 
or $\alpha=1$ and $n\leq 6$,  computer-assisted  
(Albouy-Moeckel 2000  \cite{AlbouyInverseProblemCollinear2000}); 
it is true for $n\leq 6$ and $\alpha=1$ (Xie 2014 \cite{Xieanalyticalproofcertain2014}).
\end{remark}

The following lemma generalizes Theorem 2.4.(1-2) of 
\cite{Xieanalyticalproofcertain2014};
the main conclusion
follows from Proposition 5 of \cite{AlbouyInverseProblemCollinear2000}.

\begin{lemma}
\label{lemma:Pf>0}
If $q_1>q_2>q_3>q_4$ and as above 
$Q_{ij} =q_{ij}\abs{q_{ij}}^{-\alpha-2}$, 
then
$Q_{12}Q_{34}>Q_{13}Q_{24}$, 
and $ Q_{23}Q_{14} > Q_{13} Q_{24}$, and hence
\[
Q_{12} Q_{34} - Q_{13}Q_{24} + Q_{23}Q_{14} > 0.
\]
\end{lemma}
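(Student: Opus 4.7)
The plan is to reduce the two stated inequalities, and hence the Pfaffian positivity, to a short collinear Ptolemy-type identity followed by an elementary monotonicity argument.

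First, I would exploit the ordering hypothesis: since $q_1>q_2>q_3>q_4$, every difference $q_{ij}:=q_i-q_j$ with $i<j$ is strictly positive, so $\abs{q_{ij}}=q_{ij}$ and
$$Q_{ij}=q_{ij}\abs{q_{ij}}^{-\alpha-2}=q_{ij}^{-\alpha-1}.$$
In particular every $Q_{ij}$ with $i<j$ is positive, and the map $x\mapsto x^{-\alpha-1}$ is strictly decreasing on $(0,\infty)$ because $\alpha>0$. Consequently the two target inequalities
$$Q_{12}Q_{34}>Q_{13}Q_{24},\qquad Q_{23}Q_{14}>Q_{13}Q_{24}$$
are equivalent to the \emph{reverse} product inequalities
$$q_{12}q_{34}<q_{13}q_{24},\qquad q_{14}q_{23}<q_{13}q_{24}.$$

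Next, I would establish a collinear analogue of Ptolemy's identity. Using $q_{13}=q_{12}+q_{23}$, $q_{24}=q_{23}+q_{34}$ and $q_{14}=q_{12}+q_{23}+q_{34}$, a direct expansion yields
$$q_{13}q_{24}=(q_{12}+q_{23})(q_{23}+q_{34})=q_{12}q_{34}+q_{23}\bigl(q_{12}+q_{23}+q_{34}\bigr)=q_{12}q_{34}+q_{23}q_{14}.$$
Since $q_{12},q_{23},q_{34},q_{14}$ are all strictly positive, each summand on the right is strictly less than the left-hand side, giving the two product inequalities of the previous paragraph. Applying the decreasing map $x\mapsto x^{-\alpha-1}$ then delivers the two claimed inequalities on the $Q_{ij}$.

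Finally, the concluding inequality $Q_{12}Q_{34}-Q_{13}Q_{24}+Q_{23}Q_{14}>0$ follows at once from either strict inequality together with the positivity of the remaining term. I do not expect a genuine obstacle here: the only real content is spotting the Ptolemy-like identity $q_{13}q_{24}=q_{12}q_{34}+q_{23}q_{14}$, after which the sign $\alpha>0$ (which makes the exponent $-\alpha-1$ negative) automatically translates the collinear Ptolemy inequalities $q_{12}q_{34}<q_{13}q_{24}$ and $q_{23}q_{14}<q_{13}q_{24}$ into the reversed inequalities for $Q_{ij}$ required by the statement.
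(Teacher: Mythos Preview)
Your proof is correct and follows the same overall strategy as the paper: rewrite $Q_{ij}=q_{ij}^{-\alpha-1}$ for $i<j$, use the strict monotonicity of $x\mapsto x^{-\alpha-1}$ to reduce the $Q$-inequalities to the reversed product inequalities in the $q_{ij}$, and then verify those by elementary algebra.

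The only difference is in that last algebraic step. The paper treats just one inequality directly, expanding $q_{14}=q_{13}+q_{34}$ and $q_{24}=q_{23}+q_{34}$ and comparing ratios to obtain $q_{23}q_{14}<q_{13}q_{24}$; it then deduces the Pfaffian positivity from that single inequality together with $Q_{12}Q_{34}>0$. You instead invoke the collinear Ptolemy identity $q_{13}q_{24}=q_{12}q_{34}+q_{23}q_{14}$, which in one stroke gives \emph{both} strict inequalities $q_{12}q_{34}<q_{13}q_{24}$ and $q_{23}q_{14}<q_{13}q_{24}$. Your route is a little cleaner and more symmetric, and it actually establishes the first inequality of the lemma explicitly (the paper states it but only proves the second in detail); the paper's route, on the other hand, avoids spotting the identity and works by bare hands.
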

\begin{proof}
\[
\begin{aligned}
Q_{23} Q_{14} > Q_{13} Q_{24}  \iff  & 
(q_{23} q_{14} )^{-\alpha-1}  > (q_{13} q_{24})^{-\alpha-1} 
\\
    \iff & 
q_{23} q_{14}   < q_{13} q_{24} \\
 \iff  & 
q_{23} (q_{13} + q_{34}) < q_{13}  (q_{23} + q_{34}) 
\\ 
  \iff &
\dfrac{ q_{13} + q_{34} }{ q_{13}}  < \dfrac{q_{23} + q_{34}}{ q_{23} } \\
 \iff  &
1 + \dfrac{q_{34} }{ q_{13}}  <  1+ \dfrac{q_{34}}{ q_{23} }~, \\
\end{aligned}
\]
and the last inequality holds true since $q_{13} > q_{23}$. 
Now, this implies 
\[
Q_{12}Q_{34} - Q_{13} Q_{24}  + Q_{23} Q_{14} > Q_{12}Q_{34} > 0 ~. 
\]
\end{proof}

The following lemma generalizes Theorem 2.4.(3) of 
\cite{Xieanalyticalproofcertain2014}.

\begin{lemma}
\label{lemma:Pf'>0}
Assume $q_1>q_2>q_3>q_4$, 
and as above 
$Q_{ij} =q_{ij}\abs{q_{ij}}^{-\alpha-2}$.
The function $f(q_4) = \pfaff A_4 = Q_{14}Q_{23} -Q_{24}Q_{13} +Q_{34} Q_{12}$
is monotone increasing in $(-\infty,q_3)$, with $q_1,q_2,q_3$ fixed. 
The function $g(q_1) = \pfaff A_4$ is monotone decreasing in $(q_2,+\infty)$,
with $q_2,q_3,q_4$ fixed. 
\end{lemma}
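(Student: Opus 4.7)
The plan is to reduce each monotonicity claim to a single elementary comparison of products, in the same spirit as Lemma \ref{lemma:Pf>0} but considerably simpler. Writing $s=\alpha+1$ throughout, so that under the ordering $q_1>q_2>q_3>q_4$ one has $Q_{ij}=(q_i-q_j)^{-s}>0$ for $i<j$, the key derivative is $\frac{d}{dq_4}(q_i-q_4)^{-s}=s(q_i-q_4)^{-s-1}$, and its analogue for $q_1$.

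For the first assertion, only $Q_{14},Q_{24},Q_{34}$ depend on $q_4$, and differentiation gives
$$f'(q_4)=s\bigl[Q_{23}(q_1-q_4)^{-s-1}-Q_{13}(q_2-q_4)^{-s-1}+Q_{12}(q_3-q_4)^{-s-1}\bigr].$$
The first summand is positive, so it will suffice to show that the third dominates the absolute value of the second, i.e.
$$Q_{12}(q_3-q_4)^{-s-1}>Q_{13}(q_2-q_4)^{-s-1}.$$
Clearing denominators, this is equivalent to
$$(q_1-q_3)^{s}(q_2-q_4)^{s+1}>(q_1-q_2)^{s}(q_3-q_4)^{s+1},$$
which I would then verify term-by-term using $q_1-q_3>q_1-q_2>0$ and $q_2-q_4>q_3-q_4>0$ (both consequences of $q_2>q_3$). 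This yields $f'(q_4)>0$ on the whole interval $(-\infty,q_3)$.

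For $g(q_1)$ the calculation is entirely symmetric. Only $Q_{12},Q_{13},Q_{14}$ depend on $q_1$, and $\frac{d}{dq_1}(q_1-q_j)^{-s}=-s(q_1-q_j)^{-s-1}$, so
$$g'(q_1)=-s\bigl[Q_{23}(q_1-q_4)^{-s-1}-Q_{24}(q_1-q_3)^{-s-1}+Q_{34}(q_1-q_2)^{-s-1}\bigr].$$
The same kind of term-by-term comparison gives $Q_{34}(q_1-q_2)^{-s-1}>Q_{24}(q_1-q_3)^{-s-1}$, once again because $q_1-q_3>q_1-q_2$ and $q_2-q_4>q_3-q_4$; hence the bracket is positive and $g'(q_1)<0$ on $(q_2,+\infty)$.

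I do not anticipate any real obstacle: the whole argument is a routine differentiation followed by a single elementary inequality. The only care required is sign bookkeeping in the Pfaffian expansion and in the derivatives. It is worth noting that, unlike in the proof of Lemma \ref{lemma:Pf>0}, where the inequality $Q_{23}Q_{14}>Q_{13}Q_{24}$ required a small algebraic rearrangement, here both reductions collapse to a direct term-by-term comparison because the extra exponent $-s-1$ is attached precisely to the larger of the two distances in each product.
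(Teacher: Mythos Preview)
Your argument is correct, and it is in fact cleaner than the paper's. After computing the same derivative $f'(q_4)$, the paper does not drop the first summand; instead it invokes both inequalities of Lemma~\ref{lemma:Pf>0} to bound $Q_{12}Q_{34}$ and $Q_{23}Q_{14}$ from below by $Q_{13}Q_{24}$, reducing the bracket to $\bigl(\tfrac{1}{q_{34}}-\tfrac{1}{q_{24}}+\tfrac{1}{q_{14}}\bigr)Q_{13}Q_{24}>0$. Your route bypasses Lemma~\ref{lemma:Pf>0} entirely: you simply discard the positive term $Q_{23}(q_1-q_4)^{-s-1}$ and note that $(q_1-q_3)^{s}(q_2-q_4)^{s+1}>(q_1-q_2)^{s}(q_3-q_4)^{s+1}$ holds factor by factor, which is more elementary. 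For the second assertion the paper takes yet another path: rather than redo the computation, it applies the reflection $y_j=-q_{5-j}$, checks that the Pfaffian is invariant, and deduces the monotonicity in $q_1$ from the already-proved monotonicity in $y_4$. Your direct repetition of the calculus works just as well and avoids the coordinate change; the paper's symmetry argument, on the other hand, makes manifest why the two monotonicity statements are really one.
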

\begin{proof}
\[
\begin{aligned}
\dfrac{d (\pfaff A_4) }{d q_4} & =  (\alpha+1) \left( q_{14}^{-\alpha-2} Q_{23} - q_{24}^{-\alpha-2} Q_{13}
+ q_{34}^{-\alpha-2} Q_{12} \right) \\
& = (\alpha+1) \left(
 Q_{12}\dfrac{Q_{34}}{q_{34}} - Q_{13} \dfrac{Q_{24}}{q_{24}}  + Q_{23} \dfrac{Q_{14}}{q_{14}} 
\right) 
\\
\end{aligned}
\]
Since $Q_{12}Q_{34}>Q_{13}Q_{24}$ 
and $ Q_{23}Q_{14} > Q_{13} Q_{24}$ by \ref{lemma:Pf>0}, 
\[
\begin{aligned}
 Q_{12}\dfrac{Q_{34}}{q_{34}} - Q_{13} \dfrac{Q_{24}}{q_{24}}  + Q_{23} \dfrac{Q_{14}}{q_{14}} & > 
 \dfrac{1}{q_{34}} Q_{13}Q_{24} - \dfrac{1}{q_{24}} Q_{13} Q_{24}  + \dfrac{1}{q_{14}} Q_{23}Q_{14} \\
 & = \left( \dfrac{1}{q_{34}} - \dfrac{1}{q_{24}} + \dfrac{1}{q_{14}} \right) Q_{13} Q_{24} >0.
\end{aligned}
\]
The second part of the statement follows by considering that if $q_1>q_2>q_3>q_4$,
then one can define $y_1=-q_4>y_2=-q_3>y_3=-q_2>y_4=-q_1$, and the Pfaffian of the 
corresponding matrix $Y_{ij} = (y_{ij})\abs{y_{ij}}^{-\alpha-2}$,
with $y_{ij} = y_i - y_j$,  is equal to
\[
\begin{array}{ccc|}
\multicolumn{1}{|c}{Y_{12}} &  Y_{13} &  Y_{14}\\
 &  Y_{23} &  Y_{24} \\
 &&  Y_{34}
\end{array}  
= 
\begin{array}{ccc|}
\multicolumn{1}{|c}{Q_{34}} &  Q_{24} &  Q_{14}\\
 &  Q_{23} &  Q_{13} \\
 &&  Q_{12}
\end{array}  
= 
\begin{array}{ccc|}
\multicolumn{1}{|c}{Q_{12}} &  Q_{13} &  Q_{14}\\
 &  Q_{23} &  Q_{24} \\
 &&  Q_{34}.
\end{array}  
\]
Since $f(y_4)$ is monotonically increasing in $(-\infty,y_3)$, and $y_3=-q_2$, the function $g(q_1) = f(-y_4)$  
is monotonically decreasing in $(q_2,+\infty)$. 
\end{proof}

The following lemma is inspired by the proof of Theorem 2.5 of  
\cite{Xieanalyticalproofcertain2014},
and in fact generalizes it.

\begin{lemma}
\label{lemma:crisscross}
If $\vq\in \RR^n$ is a (collinear) configuration with $q_1>q_2>\ldots>q_n$, and $Q$ 
denotes the skew-symmetric matrix with entries $Q_{ij}$, then 
\[
\pfaff Q = 
\begin{array}{cccc|}
\multicolumn{1}{|c}{Q_{12}} & Q_{13} & \cdots &  Q_{1n}\\
 & Q_{23} & \cdots & Q_{2n} \\
 & & \ddots & \vdots \\
 && & Q_{n-1,n}
\end{array}
=
\left(
\prod_{j=1}^{n-1} Q_{jn}
\right) \cdot \left( 
\begin{array}{cccc|}
\multicolumn{1}{|c}{\tilde Q_{12}} & \tilde Q_{13} & \cdots &  1 \\
 & \tilde Q_{23} & \cdots & 1  \\
 & & \ddots & \vdots \\
 && & 1
\end{array} \right) ~,
\]
where for each $i,j = 1,\ldots, n-1$ 
\[
\tilde Q_{ij} = 
 \left( q^{-1}_{jn}-q^{-1}_{in} \right)^{-\alpha-1}.
\]
Hence, 
if the  configuration $\tilde \vq \in \RR^{n-1}$ is 
defined by  $\tilde q_j =  - q^{-1}_{jn}$ for each $j=1,\ldots, n-1$, it satisfies
\[
\tilde q_1 >
\tilde q_2 >
\ldots >
\tilde q_{n-1}
\]
and,  as for $Q$, with $\tilde q_{ij} = \tilde q_i - \tilde q_j$,
$\tilde Q_{ij} =\tilde q_{ij}\abs{\tilde q_{ij}}^{-\alpha-2}$.
\end{lemma}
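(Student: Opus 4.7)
The plan is to recast the desired factorization as a matrix congruence $Q = DBD$ with $D$ diagonal, and then invoke the standard Pfaffian identity $\pfaff(S^{\tT}AS) = \det(S)\,\pfaff(A)$ valid for any skew-symmetric $A$ and any square matrix $S$.

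The one computational ingredient is the pointwise scalar identity
\[
Q_{ij} = Q_{in}\,Q_{jn}\,\tilde Q_{ij}, \qquad 1\leq i<j<n.
\]
Using $q_i-q_j = q_{in}-q_{jn}$, and the fact that all relevant differences are positive under $q_1>\dots>q_n$, one has $Q_{ij}=(q_{in}-q_{jn})^{-\alpha-1}$ and $Q_{in}Q_{jn}=(q_{in}q_{jn})^{-\alpha-1}$, so
\[
\frac{Q_{ij}}{Q_{in}Q_{jn}}
= \left(\frac{q_{in}-q_{jn}}{q_{in}q_{jn}}\right)^{-\alpha-1}
= \left(q_{jn}^{-1}-q_{in}^{-1}\right)^{-\alpha-1} = \tilde Q_{ij}.
\]

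Next, set $D=\operatorname{diag}(Q_{1n},\ldots,Q_{n-1,n},1)$ and let $B$ be the $n\times n$ skew-symmetric matrix with $B_{ij}=\tilde Q_{ij}$ for $i<j<n$ and $B_{in}=1$ for $i<n$. The identity above, together with the trivial $Q_{in}=Q_{in}\cdot 1$, reads exactly $Q = DBD$; since $D^{\tT}=D$, the congruence transformation law for Pfaffians gives
\[
\pfaff Q = \det(D)\,\pfaff(B) = \left(\prod_{j=1}^{n-1} Q_{jn}\right)\pfaff(B),
\]
and $\pfaff(B)$ is precisely the Pfaffian displayed on the right-hand side of the statement.

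For the closing assertion, the chain $q_1>\dots>q_n$ yields $q_{jn}>0$ strictly decreasing in $j$, hence $\tilde q_j=-q_{jn}^{-1}$ is strictly decreasing; and for $i<j<n$ one computes $\tilde q_{ij} = q_{jn}^{-1}-q_{in}^{-1}>0$, so $\tilde Q_{ij}=\tilde q_{ij}\abs{\tilde q_{ij}}^{-\alpha-2}=(q_{jn}^{-1}-q_{in}^{-1})^{-\alpha-1}$, matching the formula. I do not anticipate a real obstacle: the scalar identity is a single line of algebra, and the congruence transformation law for $\pfaff$ is classical (in the diagonal case it follows directly from the definition of $\pfaff$ as a signed sum over perfect matchings, since each index $j<n$ appears in exactly one pair and thus contributes $D_{jj}=Q_{jn}$ once).
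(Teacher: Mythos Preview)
Your proof is correct and follows essentially the same route as the paper's: both conjugate $Q$ by the diagonal matrix $D=\operatorname{diag}(Q_{1n},\ldots,Q_{n-1,n},1)$ (the paper multiplies by $D^{-1}$ on each side, you write $Q=DBD$), both invoke the congruence law $\pfaff(S^{\tT}AS)=\det(S)\pfaff(A)$ for diagonal $S$, and both reduce the entry computation to the identity $\dfrac{Q_{ij}}{Q_{in}Q_{jn}}=\bigl(q_{jn}^{-1}-q_{in}^{-1}\bigr)^{-\alpha-1}$ via $q_{ij}=q_{in}-q_{jn}$. Your write-up is slightly more explicit in naming the Pfaffian transformation rule and in verifying the ordering of the $\tilde q_j$, but there is no substantive difference.
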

\begin{proof}
By multiplying on the left and the right the matrix $Q$ with the $n\times n$
matrix with diagonal $(Q_{1n}^{-1},Q_{2n}^{-1},\ldots, Q_{n-1,n}^{-1},1)$, 
one obtains a matrix $\tilde Q$ with entries
\[
\tilde Q_{ij} = \begin{cases}
\frac{Q_{ij}}{Q_{in}Q_{jn}} & \text{ if } 1\leq i,j \leq n-1 \\
\end{cases}
\]
and the proof follows 
from the fact that if $i<j$ then 
\[
\frac{Q_{ij}}{Q_{in}Q_{jn}}  
= \left( \frac{q_{ij}}{q_{in}q_{jn}}\right)^{-\alpha-1}  =
 \left( \frac{q_{in}-q_{jn} }{q_{in}q_{jn}}\right)^{-\alpha-1}  =
 \left( q^{-1}_{jn}-q^{-1}_{in} \right)^{-\alpha-1}.
\]
\end{proof}

Given an $n\times n$ skew-symmetric matrix $Q$,
let $\bordered{Q}$ denote the $(n+1)\times(n+1)$ skew-symmetric bordered matrix 
\[
\bordered{Q} =
\begin{bmatrix}
0 & Q_{12} & Q_{13} & \ldots & Q_{1n} & 1 \\
-Q_{12} & 0 & Q_{23} & \ldots & Q_{2n}  & 1\\
\vdots & \vdots & \vdots &\ddots & \vdots & \vdots\\
-Q_{1n} & -Q_{2n} & \ldots & -Q_{n-1,n} & 0 & 1 \\
-1  & -1  & \ldots & -1  & -1 & 0 \\
\end{bmatrix}~.
\]
With this notation, lemma \ref{lemma:crisscross} can be written as 
\(
\pfaff Q = \left( 
\prod_{j=1}^{n-1} Q_{jn}
\right) \pfaff \bordered{\tilde Q}  \).

\begin{propo}
\label{propo:odd}
If $n$ is odd, and for $\vq\in \conf{n}{\RR}$ the product of pfaffians
\[
 \bordered{Q}[1,\ldots, n,n+1]   Q[1,\ldots, n-1,\hat n] \neq 0
\]
is non-zero, then
equation \eqref{eq:main1d} has solutions. 
\end{propo}
\begin{proof}
Observe that equation \eqref{eq:main1d} has solutions if 
the rank of the $n\times (n+1)$ matrix 
\[
\begin{bmatrix}
0 & Q_{12} & Q_{13} & \ldots & Q_{1n} & 1 \\
-Q_{12} & 0 & Q_{23} & \ldots & Q_{2n}  & 1\\
\vdots & \vdots & \vdots &\ddots & \vdots & \vdots\\
-Q_{1n} & -Q_{2n} & \ldots & -Q_{n-1,n} & 0 & 1 \\
\end{bmatrix}
\]
is equal to $n$, which happens if for some $j \in \{1,\ldots, n\}$ the $n\times n$ square matrix
obtained by removing the $j$-th column is non-singular. 
Now, this is the same as the matrix obtained by 
removing the $(n+1)$-th row and the $j$-th column of 
the bordered matrix $\bordered{Q}$. By \eqref{eq:halton} (on transposed matrices)
its determinant is equal to
\[
Q[1,\ldots, \hat j, \ldots,  n] \pfaff \bordered{Q}.
\]
By taking $j=n$ the conclusion follows. 
\end{proof}

Note that that statement holds 
with  $j$ chosen  as any index from $1$ to $n$, instead of $n$;
moreover, because of \eqref{eq:pfaff1}, there exists  $j$ such that 
$ \bordered{Q}[1,\ldots, n,n+1]   Q[1,\ldots, \hat j,  \ldots, n] \neq 0$
if and only if 
$ \bordered{Q}[1,\ldots, n,n+1]   \neq 0$. 
See also Theorem 1 of \cite{AlbouyInverseProblemCollinear2000}, %
where 
shorter proofs or more general results are presented,
using exterior algebra as a computational device.

Let $n$ be odd and $\vq$ a configuration. Then the corresponding 
$Q_n$ is a $n\times n$ singular matrix. The two matrices in 
\ref{propo:odd} are the $(n-1)\times (n-1)$ skew-symmetric matrix $Q_{n-1}$ correponding to 
the configuration with the $n$-th body removed, and the $(n+1)\times(n+1)$ matrix  
$\bordered{Q_n}$. Because of \ref{lemma:crisscross}, 
the pfaffian $\pfaff Q_{n-1}$ is non-zero if and only if the pfaffian 
of the corresponding $\bordered{\tilde Q_{n-1}}$ is non-zero. But 
$\tilde Q_{n-1}$ is 
an $(n-2)\times(n-2)$ matrix.
So, for odd $n$ the existence of solutions to \eqref{eq:main1d} follows 
from the calculation of pfaffians of the even-dimensional matrices $\bordered{Q_{n}}$ 
and $\bordered{\tilde Q_{n-1}}$ 
(the existence of solutions for $n=5$ was proven in Theorem 2.6 of 
\cite{Xieanalyticalproofcertain2014} in a different way).  

On the other hand, let $n$ be even, and $\vq$ a configuration and $Q_n$ as above. 
By \ref{lemma:crisscross} the existence of solutions to \eqref{eq:main1d} follows 
from the calculation of the pfaffian of $\bordered{\tilde Q_{n}}$, 
where $\tilde Q_n$ is a matrix with odd size.

\begin{theo}
For all $\alpha>0$, and any $n\leq 6$, 
the pfaffian of $Q$ (for even $n$) or of $\bordered{Q}$ (for odd n)
is non-zero, 
hence for each configuration $\vq$ equation \eqref{eq:main1d} has solutions  
with real masses $m_j$. 
\end{theo}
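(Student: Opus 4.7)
The plan is to dispatch each $n \leq 6$ in turn, using the reductions already set up. For even $n$, Lemma \ref{lemma:crisscross} writes $\pfaff Q_n$ as $\prod_{j<n} Q_{jn}$ times $\pfaff \bordered{\tilde Q_{n-1}}$; the prefactor is strictly positive (each $Q_{jn}>0$ since $q_j>q_n$), so the even case collapses onto the bordered pfaffian of a smaller odd-sized tilde configuration. For odd $n$, Proposition \ref{propo:odd} asks for the simultaneous nonvanishing of $\pfaff \bordered{Q_n}$ and of one sub-pfaffian of $Q_n$, and that sub-pfaffian is handled directly by Lemma \ref{lemma:Pf>0} (for $n=5$) or by inspection (for $n=3$). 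So the genuinely new work is to show $\pfaff \bordered{Q_n}>0$ for odd $n\in\{3,5\}$.

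The main case is $n=5$. Expanding $\pfaff \bordered{Q_5}$ along its bordering row via \eqref{eq:pfaff1} yields
\[
\pfaff \bordered{Q_5} \;=\; P_1 - P_2 + P_3 - P_4 + P_5,
\]
where $P_j$ denotes the pfaffian of the 4-body sub-configuration obtained by deleting body $j$ from $\vq$. Each $P_j$ is positive by Lemma \ref{lemma:Pf>0}, but the alternating sign rules out the naive estimate. The key move is the regrouping $(P_1-P_2)+P_3+(P_5-P_4)$: the pair $(P_1,P_2)$ consists of two 4-body pfaffians sharing the bodies $q_3,q_4,q_5$ and differing only in their outermost-right body ($q_2$ in $P_1$ versus $q_1$ in $P_2$), so since $q_1>q_2$ the monotonicity of $g$ in Lemma \ref{lemma:Pf'>0} gives $P_1>P_2$; analogously, $(P_5,P_4)$ share $q_1,q_2,q_3$ and differ only in their outermost-left body ($q_4$ versus $q_5$), so since $q_5<q_4$ the monotonicity of $f$ in the same lemma gives $P_5>P_4$. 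All three summands are then positive.

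The remaining cases are short: $n=2$ is immediate; $n=3$ follows by expanding $\pfaff \bordered{Q_3}=Q_{12}-Q_{13}+Q_{23}$ and using $Q_{12}>Q_{13}$ (same reciprocal inequality underlying Lemma \ref{lemma:Pf>0}); $n=4$ is Lemma \ref{lemma:Pf>0}; and $n=6$ reduces via Lemma \ref{lemma:crisscross} to $\pfaff \bordered{\tilde Q_5}$ applied to the strictly ordered tilde configuration $\tilde q_j = -q_{j6}^{-1}$, which is positive by the $n=5$ case just established. The main obstacle is the $n=5$ step, specifically spotting the correct grouping of the alternating sum so that each of the three summands matches the hypotheses of Lemma \ref{lemma:Pf'>0}; once this grouping is identified the argument is immediate, and it delivers not merely nonvanishing but strict positivity of $\pfaff \bordered{Q_5}$ throughout the configuration space.
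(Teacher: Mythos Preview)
Your proposal is correct and follows essentially the same route as the paper: reduce even $n$ to the bordered pfaffian of an odd-sized configuration via Lemma~\ref{lemma:crisscross}, then for $n=5$ expand $\pfaff\bordered{Q}$ along the border using \eqref{eq:pfaff1} and pair the alternating terms as $(P_1-P_2)+P_3+(P_5-P_4)$, invoking the monotonicity of Lemma~\ref{lemma:Pf'>0} for the two differences and Lemma~\ref{lemma:Pf>0} for the middle term. The paper's proof is slightly terser (it does not spell out $n=2,3,4$ separately), but the substance is identical.
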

\begin{proof}
By lemma \ref{lemma:crisscross}, as explained before, the pfaffian of the matrix 
corresponding to a collinear configuration $\vq\in \conf{n}{\RR}$ 
with $n$ even is non-zero, if it is non-zero the pfaffian of the bordered 
matrix $\bordered{Q}$ corresponding to collinear $n-1$ bodies. 
For $n=5$ one can apply \eqref{eq:pfaff1} and obtain, 
given that $\bordered{Q}_{j6} = 1$ for $j=1,\ldots, 5$, 
\[
\begin{aligned}
\pfaff \bordered{Q} & = 
 Q[\hat 1, 2,3,4,5] 
-  Q[1,\hat 2, 3,4,5]  \\
& +  Q[1,2,\hat 3, 4,5] 
-  Q[1,2,3,\hat 4, 5]  
+  Q[1,2,3,4,\hat 5].
\end{aligned}
\]
Without loss of generality one can assume $q_1>q_2\ldots > q_5$:
since by lemma \ref{lemma:Pf'>0} the pfaffian $Q[2,3,4,5]$ is decreasing in $q_2$, and $q_1>q_2$, 
one has $Q[1,3,4,5]  < Q[2,3,4,5]$;
since $Q[1,2,3,4]$ is increasing in $q_4$, and $q_4>q_5$, 
$Q[1,2,3,4] > Q[1,2,3,5]$. Therefore $\pfaff \bordered{Q} > Q[1,2,\hat 3, 4,5]$,
which is strictly positive by \ref{lemma:Pf>0}.  
\end{proof}

\begin{remark}
Such a nice argument, introduced already by Xie in 
\cite{Xieanalyticalproofcertain2014}, unfortunately does not work as it is for $n>6$: 
when $n\geq 8$ in the (symmetric) sum of 7 terms only the two consecutive terms at both 
endpoints
can be estimated by monotonicity. It is very interesting that, at least for $\alpha=1$ 
when the pfaffian is a rational function of the mutual distances, 
it is possible to prove its positivity by checking that \emph{all the coefficients} 
of the polynomials are positive. This was found by Albouy and Moeckel in 
\cite{AlbouyInverseProblemCollinear2000}: in the following we show how we computed 
the polynomial for $n=8$ and $10$, finding that it has all positive coefficients.  

It is maybe worth noting that in the notation of 
\cite{AlbouyInverseProblemCollinear2000} the following equalities 
hold: if $n=2k$ then 
$ K_n = k! \pfaff Q$ 
while if $n=2k+1$,  then
$K_n^L = k! \pfaff \bordered{Q}$. 
\end{remark}

\begin{lemma}
\label{lemma:polynomial}
Let $\alpha=1$, $\vq\in \conf{n}{\RR}$ an ordered collinear configuration (with 
$q_1>q_2>\ldots>q_n$, and as above $q_{ij}=q_i-q_j$), and $n$ even.  
Let $P$ be  the skew-symmetric matrix 
defined for each $i<j$ by $P_{ij} = $ the product of all $q_{ab}$ such that $a\in \{i,j\}$ or $b\in \{i,j\}$
and $a<b$:
\[
P_{ij} = \prod_{\substack{1\leq a<b\leq n\\ \{a,b\}\cap \{i,j\}\neq\emptyset \\ (a,b) \neq (i,j)}} q_{ab}.
\]
Its pfaffian and the pfaffian of 
the anti-symmetric matrix with terms $Q_{ij} = q_{ij}^{-2}$ for $i<j$
satisfy the identity
\[
\pfaff P = \left( \prod_{1\leq i<j\leq n} q_{ij}^2 \right) \pfaff Q.
\]
\end{lemma}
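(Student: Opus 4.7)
The plan is to realize $P$ as a diagonal conjugate of $Q$ and then apply the standard identity $\pfaff(DAD^{\tT}) = \det(D)\,\pfaff(A)$ for a diagonal matrix $D$. Concretely, I would set
\[
d_i = \prod_{k\neq i} \abs{q_{ik}}, \qquad i=1,\ldots,n,
\]
and let $D$ be the diagonal matrix with entries $d_1,\ldots,d_n$. Because $D$ is symmetric, $DQD^{\tT} = DQD$ is again skew-symmetric, and its pfaffian equals $\det(D)\,\pfaff(Q) = \bigl(\prod_i d_i\bigr) \pfaff Q$.

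Next I would check the pointwise identity $P = DQD$, i.e.\ $P_{ij} = d_i d_j\,Q_{ij}$ for all $i<j$. Since $\alpha=1$ gives $Q_{ij} = q_{ij}^{-2}$ (recall $q_{ij}>0$ for $i<j$ under the ordering $q_1>\cdots>q_n$), this amounts to showing $d_i d_j = q_{ij}^2 P_{ij}$. In the product $d_i d_j = \prod_{k\neq i}\abs{q_{ik}} \cdot \prod_{k\neq j}\abs{q_{jk}}$, the two terms $k=j$ in the first factor and $k=i$ in the second contribute $\abs{q_{ij}}^2 = q_{ij}^2$; the remaining factors range over $k\in\{1,\ldots,n\}\setminus\{i,j\}$, producing exactly one copy of $q_{ab}$ for each unordered pair $\{a,b\}$ that meets $\{i,j\}$ but differs from $\{i,j\}$. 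By the definition of $P$, this is precisely $P_{ij}$, so the required identity holds.

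Finally I would compute $\det(D)=\prod_i d_i$. Each $\abs{q_{ab}}$ with $a<b$ appears in the product $d_a d_b$ exactly twice (once as the $k=b$ factor of $d_a$ and once as the $k=a$ factor of $d_b$), and in no other $d_i$, so
\[
\prod_{i=1}^n d_i \;=\; \prod_{1\le a<b\le n} \abs{q_{ab}}^2 \;=\; \prod_{1\le a<b\le n} q_{ab}^2.
\]
Combining the two displays yields $\pfaff P = \bigl(\prod_{i<j} q_{ij}^2\bigr) \pfaff Q$.

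There is no real obstacle here; the only slightly delicate point is the combinatorial bookkeeping in verifying $P_{ij} = d_i d_j Q_{ij}$, which is transparent once one observes that $d_i d_j$ is a product of absolute values of $q_{k\ell}$ for precisely those pairs $\{k,\ell\}$ touching $\{i,j\}$, with the pair $\{i,j\}$ itself contributing twice.
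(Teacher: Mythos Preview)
Your argument is correct and follows essentially the same approach as the paper: both realize $P$ as a diagonal congruence $DQD$ of $Q$ and then invoke the identity $\pfaff(DQD)=\det(D)\,\pfaff Q$. The only cosmetic difference is that the paper takes the signed scalars $(-1)^{j-1}\prod_{i\neq j} q_{ij}$ (which, since $n$ is even, equal $-d_j$ in your notation) and tracks the resulting signs, whereas your use of absolute values $d_i=\prod_{k\neq i}\lvert q_{ik}\rvert$ makes the bookkeeping slightly cleaner.
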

\begin{proof}
Let $P'$ denotes the matrix obtained by 
multiplying the $j$-th row and column of $Q$ by the factor
$\displaystyle(-1)^{j-1}\prod_{\substack{1\leq i \leq n\\ i\neq j}} q_{ij}$, for $j=1,\ldots, n$. 
It follows that 
\[\begin{aligned}
\pfaff P' & = 
\left( \prod_{1\leq j \leq n} (-1)^{j-1} \prod_{\substack{1\leq i \leq n\\ i\neq j}} q_{ij}\right)
\pfaff Q %
= %
\left( 
\prod_{1\leq i<j\leq n} q_{ij}^2 
\right)
\pfaff Q\\
\end{aligned}\]
since
\[
\prod_{\substack{1\leq i \leq n\\ i\neq j}} q_{ij} = 
\left( \prod_{1\leq i < j} q_{ij} \right) 
\left( \prod_{j<i\leq n} q_{ij} \right)
= 
(-1)^{n-j-1}
\left( \prod_{1\leq i < j} q_{ij} \right) 
\left( \prod_{j<i\leq n} q_{ji} \right).
\]
This implies also that 
the $ij$-entry of $P'$ is equal to 
\[
\begin{aligned}
P'_{ij} & = q_{ij}^{-2} 
\left( \prod_{\substack{1\leq a<b\leq n\\ i\in \{a,b\} } } q_{ab} \right)
\left( \prod_{\substack{1\leq a<b\leq n\\ j\in \{a,b\} } } q_{ab} \right) %
= P_{ij}~.
\end{aligned}
\qedhere
\]
\end{proof}

\begin{remark}
\label{remark:polynomial}
For even $n$, if the matrix $P$ of \ref{lemma:polynomial} is computed starting 
from
the matrix $\bordered{\tilde Q}$ 
of \ref{lemma:crisscross} instead of $Q$, it can be renamed  $\tilde P$: 
its pfaffian is a polynomial in the $n-2$ variables
$\tilde {x}_j=\tilde q_{j}-\tilde q_{j+1}=\tilde q_{j,j+1}$ for $j=1,\ldots, n-2$,
where $\tilde q_j = -q_{jn}^{-1}$, 
and for each $1\leq i<j < n$ the equality $\tilde q_{ij} = \frac{q_{ij}}{q_{in}{q_{jn}}}$ holds,
and $\tilde q_{in} = 1$.  
Note that $\tilde q_n$ is not defined, 
and $\tilde q_{in}$ is \emph{not} $\tilde q_{i} - \tilde q_{n}$;
hence  
 $\tilde q_{in} = 1$, 
for $i=1,\ldots, n-1$, does not imply $\tilde q_1 = \ldots = \tilde q_{n-1}$. 
\end{remark}

\begin{theo}
\label{theo:code}
The pfaffian of the matrix $P$, defined in \ref{lemma:polynomial}, is a polynomial 
with non-negative integer coefficients, for each even $n\leq 8$, 
with respect to the variables $x_1,\ldots, x_{n-1}$, 
defined as $x_j=q_{j}-q_{j+1}=q_{j,j+1}$ for $j=1,\ldots, n-1$. 

The pfaffian of the matrix $\tilde P$, defined in \ref{remark:polynomial}, 
is a polynomial with non-negative integer coefficients, for each even $n\leq 10$,
with respect to the variables $\tilde x_1,\ldots, \tilde x_{n-2}$, 
defined as $\tilde x_j=\tilde q_{j}-\tilde q_{j+1}=\tilde q_{j,j+1}$ for $j=1,\ldots, n-2$,
where $\tilde  q_j = - q_{jn}^{-1}$.  

As a consequence, for each even $n\leq 10$ the pfaffian of $Q$ is positive. 
\end{theo}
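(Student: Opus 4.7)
The plan is a two-step reduction followed by a computer-assisted symbolic verification. By \ref{lemma:crisscross}, for a configuration $q_1 > q_2 > \cdots > q_n$ the identity $\pfaff Q = \bigl(\prod_{j=1}^{n-1} Q_{jn}\bigr)\, \pfaff \bordered{\tilde Q}$ holds, and the scalar factor is a product of strictly positive numbers. By \ref{lemma:polynomial} applied to $\bordered{\tilde Q}$ (in the form described in \ref{remark:polynomial}), one has $\pfaff \tilde P = \bigl(\prod \tilde q_{ij}^{2}\bigr)\, \pfaff \bordered{\tilde Q}$, again with a strictly positive scalar factor. Thus $\pfaff Q$ has the same sign as $\pfaff \tilde P$ evaluated at the configuration, and the positivity consequence reduces to the second assertion of the theorem together with the observation that the resulting polynomial is not identically zero on the positive orthant.

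To establish the two polynomial claims, I would expand the pfaffian symbolically using the recursive Laplace-type identity \eqref{eq:pfaff1} (equivalently, as a signed sum over perfect matchings). For the first claim, each entry $P_{ij}$ is a product of factors $q_{ab}$, and substituting $q_{ab} = x_{a} + x_{a+1} + \cdots + x_{b-1}$ makes every $P_{ij}$ a polynomial in $x_1, \ldots, x_{n-1}$ with non-negative integer coefficients. The analogous substitution makes each entry of $\tilde P$ a non-negative-coefficient polynomial in $\tilde x_1, \ldots, \tilde x_{n-2}$, recalling that $\tilde q_{in} = 1$. The essential content of the theorem is that, after the signed summation over matchings and the full collection of monomials, cancellations leave no negative coefficient behind. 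This cannot be read off the expansion a priori, since individual matchings contribute with both signs and do produce negative monomials that must be compensated exactly; it can only be certified by carrying out the computation and inspecting the resulting coefficients.

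Once the explicit polynomials are available, the positivity of $\pfaff Q$ is immediate: the configuration region $q_1 > q_2 > \cdots > q_n$ corresponds precisely to $\tilde x_j > 0$ for $j = 1, \ldots, n-2$, and a polynomial with non-negative integer coefficients evaluated on strictly positive inputs is non-negative, and strictly positive as soon as some coefficient is non-zero. Non-triviality of the polynomial is evident by inspection of any single matching contribution (for instance, the matching $\{(1,2),(3,4),\ldots\}$ already yields a product of non-zero entries of $\tilde P$), so the verification of non-negativity of coefficients is the whole game.

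The main obstacle is computational scale. For $n = 10$ the matrix $\bordered{\tilde Q}$ is $10\times 10$, and its pfaffian expands into $9!! = 945$ signed monomials, each a product of five entries of $\tilde P$, each of which is itself a polynomial of substantial size in the eight variables $\tilde x_1, \ldots, \tilde x_8$. A naive implementation produces intermediate expressions of prohibitive size, so the computation is tractable only if the polynomial is kept in fully collected form throughout the recursion, the simplification afforded by the border entries equal to $1$ is exploited, and an efficient sparse multivariate representation is used. Implemented carefully in a computer algebra system, this strategy yields the explicit polynomials whose monomial coefficients can then be verified one by one to be non-negative integers, completing the proof.
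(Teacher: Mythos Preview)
Your proposal is correct and follows essentially the same route as the paper: reduce positivity of $\pfaff Q$ to positivity of $\pfaff \tilde P$ via \ref{lemma:crisscross} and \ref{lemma:polynomial}/\ref{remark:polynomial}, then carry out a symbolic computer-algebra expansion over perfect matchings and verify that every coefficient of the collected polynomial in the $x_j$ (respectively $\tilde x_j$) is a non-negative integer. The one logical slip is your claim that non-triviality ``is evident by inspection of any single matching contribution'' --- a single matching term being non-zero does not by itself rule out cancellation in the signed sum --- but this is harmless, since the same computer output that certifies non-negativity also exhibits strictly positive coefficients (the paper reports minimum coefficient~$1$ in every case).
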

\begin{proof}[Proof (computer assisted)]

The proof is just a computer computation, performed on some computer algebra systems. 
The output numbers for the first cases are as follows.

For $P$:

$n=4$: minumum of coefficients $=1$, maximum of coefficients = 19. 
Total of 25 non-zero coefficients in the $n-1$ variables $x_1,x_2,x_3$.

$n=6$: 
minumum of coefficients $=1$, maximum of coefficients = 
6217712.
Polynomial of degree 24 in 5 variables with 7993 non-zero coefficients.

$n=8$: 
minimum of coefficients $=1$, maximum of coefficients = 
1974986029814430328.
Polynomial of degree 48 in 7 variables with 
8863399
non-zero coefficients.

For $\tilde P$:

$n=4$: minimum of coefficients $=1$, maximum of coefficients=2.
Total of 5 non-zero coefficients in the $n-2$ variables $\tilde x_1, 
\tilde x_2$. 
The pfaffian is the polynomial of degree $(n-2)^2 = 4$
\[
\tilde x_1^4 + 2 \tilde x_1^3 \tilde x_2 + \tilde x_1^2 \tilde x_2^2 + 2 \tilde x_1 \tilde x_2^3 + \tilde x_2^4.
\]

$n=6$: minimum of coefficients $=1$, maximum of coefficients = 3018.
Total of 519 non-zero coefficients in the $4$ variables of degree $(n-2)^2=16$. 

$n=8$ minimum of coefficients $=1$, maximum of coefficients = 922577565632.
Total of 306016 non-zero coefficients in $n-2$.
Degree = $(n-2)^2 = 36$.

If $n=10$, then the number of perfect matchings is 
$\dfrac{10!}{2^{5} (5)!}=945$: for each one a polynomial of degree $64$ in 
8 variables is added. So, in theory computations  even in dense multivariate
polynomials with integer coefficients could fit into the memory of a normal computer. 
The minimum of the coefficients is $=1$, the maximum is 818182204944918819340996488. 
There are a total of 488783941 non-zero coefficients (the runtime was approximately 10 days). 
\end{proof}

For $n=12$, an empirical estimate of the time needed to perform the calculation
with this algorithm would be of the order of 4-5 years on the same computer. 

\section{Positive masses}

Consider now the inverse problem with real and positive masses:
let $X_0 \subset E^n=\RR^n$ be the subset 
$X_0 = \{ \vq \in E^n : \sum_{j=1}^n q_j = 0$,
 which is the orthogonal complement of $\vL$ in $E^n$.   
The $n$ columns of the anti-symmetric matrix $Q$ (which can
be denoted as $\vQ_1,\ldots, \vQ_n$) generate a subspace of dimension $n$ 
(for even $n$) or $n-1$ (for odd $n$) in $E^n$.
Let $\Pi$ denote the orthogonal projection of $E^n$ onto $X_0$: 
then if $\vx\in X_0$, equation \eqref{eq:main1d} is equivalent to 
\begin{equation}\label{eq:round1}
Q(\vx) \vm  + c \vL = \vx \iff \vx = \Pi Q(\vx) \vm.
\end{equation}
In fact, if $\vx =  Q(\vx) \vm + c \vL $, then by projecting 
one obtains $\Pi \vx = \vx = \Pi Q(\vx) \vm$  since $\Pi \vL = \zero$. 
Conversely, if $\vx = \Pi Q(\vx) \vm$,
then $\Pi Q(\vx)\vm -  Q(\vx) \in \ker \Pi =\operatorname{Span}(\vL)$, since $\Pi^2=\Pi$, and hence
there exists $c\in \RR$ such that  $\Pi Q(\vx)\vm - Q(\vx) \vm = c\vL$,
that is $\vx = Q(\vx) \vm + c\vL$.   
For a different set of variables, see 
Ouyang--Xie \cite{OuyangCollinearCentralConfiguration2005} (for $n=4$ 
bodies and $\alpha=1$) and Davis et al. \cite{DavisInverseproblemcentral2018a} (for $n=5$
bodies and $\alpha=1$); for the general problem with positive masses,
see again \cite{AlbouyInverseProblemCollinear2000}.

Now, define the following coefficients, for $i=1,\ldots, n$ and $j=0,\ldots, n-1$:
\begin{equation}
\label{eq:betas}
\beta_{ij} =\begin{cases}
1 & \text{ if } j=0~; \\
1-\frac{j}{n} & \text{ if } i\leq j~; \\
-\frac{j}{n} & \text{ if } i > j~. 
\end{cases}
\end{equation}
Consider the $n$ variables $x_0,x_1,\ldots, x_{n-1}$, where as above 
$x_j = q_j - q_{j-1}$ for $j=1,\ldots, n-1$), 
and $x_0 = \dfrac{1}{n}(q_1+\ldots+q_n)$. 
Note that for each $i=1,\ldots, n$ and $j=2,\ldots, n-1$ one has 
\[
\beta_{ij}-\beta_{i,j-1} = 
\begin{cases}
-\frac{j}{n} + \frac{j-1}{n} = -\frac{1}{n} & \text{ if } j < i \\
\frac{n-1}{n} & \text{ if } j = i \\
-\frac{1}{n}  & \text{ if } j > i \\
\end{cases}
\]
and therefore,
since $\beta_{i0}=1$,  for each $i=1\ldots n$ 
the following identities hold
\begin{equation}
\label{eq:betas2}
q_i = \sum_{j=0\cdots n-1} \beta_{ij} x_j
\quad \& \quad  x_0=\frac{1}{n}\sum_{i=1\cdots n} q_i, 
j>0\implies x_j = q_j - q_{j+1}. 
\end{equation}

Equation \eqref{eq:betas2} can be written in matrix form as follows
\begin{lemma}
\label{lemma:betas2}
Let $B$ be the matrix with coefficients $b_{ij} = \beta_{i,j-1}$ 
defined above,
$\vx$ the column vector with components $x_0,\ldots, x_{n-1}$ 
and $\vq$ the column vector with components $q_1,\ldots, q_n$. 
Then  $B$ is an invertible matrix such that $\vq = B \vx$. 
\end{lemma}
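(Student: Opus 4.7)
The plan is to read both claims directly off equation \eqref{eq:betas2}, treating its two halves as encoding $B$ and $B^{-1}$ respectively. Nothing new has to be proved; the only real work is bookkeeping the index shift between the variable $x_j$ (indexed from $0$) and the $j$-th column of $B$ (indexed from $1$).

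First I would verify $\vq = B\vx$. Since $\vx$ has components $x_0,\ldots,x_{n-1}$, its $j$-th entry (for $j=1,\ldots,n$) is $x_{j-1}$. With $b_{ij}=\beta_{i,j-1}$, the $i$-th component of $B\vx$ is
\[
(B\vx)_i = \sum_{j=1}^{n} b_{ij}\,x_{j-1} = \sum_{j=1}^{n} \beta_{i,j-1}\,x_{j-1} = \sum_{k=0}^{n-1} \beta_{ik}\,x_k,
\]
which equals $q_i$ by the first identity in \eqref{eq:betas2}. Hence $\vq=B\vx$.

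For invertibility I would exhibit $B^{-1}$ explicitly, reading it off from the second half of \eqref{eq:betas2}. Let $C$ be the $n\times n$ matrix whose first row is $\bigl(\tfrac{1}{n},\tfrac{1}{n},\ldots,\tfrac{1}{n}\bigr)$, and whose $(j{+}1)$-th row, for $j=1,\ldots,n-1$, has entry $+1$ in column $j$, entry $-1$ in column $j+1$, and zero elsewhere. Then $(C\vq)_1 = \tfrac{1}{n}\sum_{i=1}^n q_i = x_0$ and $(C\vq)_{j+1} = q_j-q_{j+1} = x_j$ for $j=1,\ldots,n-1$, so $C\vq=\vx$ by \eqref{eq:betas2}.

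Combining the two identities, for every $\vx\in\RR^n$ one has $CB\vx = C\vq = \vx$, so $CB=I$; therefore $B$ is invertible with $B^{-1}=C$. The only ``obstacle'' is keeping the two different index conventions straight, and I would simply be explicit about the shift $b_{ij}=\beta_{i,j-1}$ once and then let the two formulas in \eqref{eq:betas2} do the rest of the work.
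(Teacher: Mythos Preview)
Your proposal is correct and matches the paper's treatment: the paper gives no separate proof, presenting the lemma simply as the matrix form of \eqref{eq:betas2}, and your write-up just spells out that translation. One small logical wrinkle: \eqref{eq:betas2} is established in the paper starting from an arbitrary $\vq$, defining $\vx=C\vq$, and then verifying $\vq=B\vx$; this gives $BC=I$ directly, whereas your final line asserts $CB=I$ by starting from an arbitrary $\vx$. Either direction is fine for square matrices, but the cleaner phrasing is to quantify over $\vq$ and conclude $BC=I$.
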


Given equation \eqref{eq:round1}, and the permutation symmetries of the potential, 
we can restrict the problem to the cone
\[
X_0^+ = \{ \vq \in X_0 : q_1>q_2> \ldots > q_n\},
\]
which in coordinates $\vx$ can be written as
\[
X_0^+ = \{ \vx : x_0 = 0, x_i>0, i=1,\ldots, n-1\}.
\]
In such coordinates, equation \eqref{eq:round1} is transformed in 
\begin{equation}
\label{eq:round2}
x_i = (B^{-1}Q\vm)_i, \quad i=1,\ldots, n-1,
\end{equation}
with suitable substitutions in the expressions of $Q$. 
For example, if $n=3$ one has to consider only the second and third rows of the 
following equation
\[
\begin{aligned}
\begin{bmatrix}
x_0\\x_1\\x_2
\end{bmatrix}
& 
=
\begin{bmatrix}
1/3 & 1/3 & 1/3 \\
1 & -1 & 0 \\
0 & 1 & -1
\end{bmatrix}
\begin{bmatrix}
0 & Q_{12} & Q_{13} \\
-Q_{12} &  0 & Q_{23} \\
-Q_{13} & -Q_{23} & 0 
\end{bmatrix}
\begin{bmatrix}
m_1\\m_2\\m_3
\end{bmatrix}
,
\end{aligned}
\]
which turns out to be
\[
\begin{bmatrix}
x_1 \\ x_2 
\end{bmatrix}
= 
\begin{bmatrix}
Q_{12} &         
Q_{12} & 
Q_{13} - Q_{23} 
\\
-Q_{12} +Q_{13} & 
Q_{23} & 
Q_{23}
\\
\end{bmatrix}
\begin{bmatrix} 
m_1\\m_2\\m_3
\end{bmatrix}~.
\]
As above, $Q_{ij} = q_{ij}^{-\alpha-1}$, for $i<j$, and hence 
the last equation can be written as 
\[
\begin{bmatrix}
x_1 \\ x_2 
\end{bmatrix}
= 
\begin{bmatrix}
x_1^{-\alpha-1} &         
x_1^{-\alpha-1} & 
(x_1+x_2)^{-\alpha-1} - x_2^{-\alpha-1}
\\
-x_1^{-\alpha-1} + (x_1+x_2)^{-\alpha-1}
& 
x_2^{-\alpha-1} & 
x_2^{-\alpha-1}
\\
\end{bmatrix}
\begin{bmatrix} 
m_1\\m_2\\m_3
\end{bmatrix}~.
\]

Another way of writing equation \eqref{eq:round2} is as follows: if now $\vx$ denotes 
the $(n-1)$-dimensional vector of positive coordinates $x_1,\ldots, x_{n-1}>0$, 
\begin{equation}
\label{eq:round3}
\vx = \sum_{k=1}^n m_k \vY_k \text{ with } m_k >0 ,
\end{equation}
where $\vY_k$ is the $(n-1)$-dimensional vector with components
\(
Y_{ik} = Q_{i,k} - Q_{i+1,k}
\)
for $i=1,\ldots, n-1$ and $k=1,\ldots, n$. 
  Given that for each $k$
\[
\sum_{i=i\ldots n-1} Y_{ik} = 
\sum_{i=1\ldots n-1}  (Q_{i,k}-Q_{i+1,k}) = 
Q_{1,k} + Q_{k,n} > 0
\]
$\vx$ and all $\vY_k$ belong to the half-space  $x_1+x_2+\ldots + x_{n-1}>0$,
and can be centrally projected  on the hyperplane 
$x_1+x_2+\ldots + x_{n-1} = 1$. 
Let $\Delta^{n-2}$ denote the standard euclidean simplex in coordinates $x_i$,
and $X_1$ the affine subspace $X_1 = \{ \vx \in X_0  : x_1+x_2\ldots + x_{n-1} = 1 \}$. 
Let $p$ denote central projection $p(\vx) = \dfrac{\vx}{\sum_{i=1\ldots n-1}x_i }  $,
partially defined $p\from X_0 \to X_1$. 
\begin{lemma}
\label{lemma:sumYikpositive}
The vector $\vx$ is a solution of \eqref{eq:round3} 
if and only if its projection $p(\vx)$ is a solution of 
\begin{equation}
\label{eq:round4}
\vx = \sum_{k=1}^n m'_k p(\vY_k) \text{ with } m'_k >0,
\end{equation}
with $\sum_{k} m_k =1$ and $\vx\in X_1$. 
\end{lemma}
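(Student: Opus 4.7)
The plan is to observe that \eqref{eq:round3} is a positive-cone equation, homogeneous of degree one in both sides, while \eqref{eq:round4} is its central projection onto the affine slice $X_1 = \{\vv\in X_0 : v_1 + \cdots + v_{n-1} = 1\}$. The passage between the two is pure rescaling, and the only quantitative input is the positivity of the linear functional $S(\vv) := v_1+\cdots+v_{n-1}$ evaluated on each $\vY_k$. This positivity is already established immediately before the lemma, since by telescoping
\[
S(\vY_k) = \sum_{i=1}^{n-1}(Q_{i,k}-Q_{i+1,k}) = Q_{1,k} + Q_{k,n},
\]
and for an ordered configuration $q_1 > \cdots > q_n$ both summands are $\geq 0$ with at least one strict, so $S(\vY_k) > 0$ for every $k = 1,\ldots,n$.

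For the forward implication, I would assume $\vx = \sum_k m_k \vY_k$ with $m_k > 0$ and apply the linear functional $S$ to both sides to get $S(\vx) = \sum_k m_k S(\vY_k) > 0$. Dividing through by $S(\vx)$ yields
\[
p(\vx) \;=\; \frac{\vx}{S(\vx)} \;=\; \sum_{k=1}^n \frac{m_k S(\vY_k)}{S(\vx)}\cdot \frac{\vY_k}{S(\vY_k)} \;=\; \sum_{k=1}^n m'_k\, p(\vY_k),
\]
with $m'_k := m_k S(\vY_k)/S(\vx) > 0$. Since both $p(\vx)$ and each $p(\vY_k)$ lie in $X_1$, applying $S$ once more forces $\sum_k m'_k = 1$ automatically.

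For the converse, I would suppose $p(\vx) = \sum_k m'_k p(\vY_k)$ with $m'_k > 0$ and $\vx \in X_1$ (so that $p(\vx) = \vx$), apply $S$ to obtain $\sum_k m'_k = 1$, multiply both sides by $S(\vx) > 0$, and set $m_k := m'_k S(\vx)/S(\vY_k) > 0$. This produces $\vx = \sum_k m_k \vY_k$, which is \eqref{eq:round3}.

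There is no real obstacle here: the whole content of the lemma is that positive scaling establishes a bijection between positive solutions in the cone $\{\vv : S(\vv)>0\}$ and convex-combination solutions in the simplex slice $X_1$, and this bijection only requires $S(\vY_k) > 0$ for every $k$. The sole mild care needed is to observe that the correspondence $m_k \leftrightarrow m'_k$ defined above is well-posed precisely because the $S(\vY_k)$ and $S(\vx)$ are all strictly positive.
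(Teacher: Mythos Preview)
Your proof is correct and follows essentially the same approach as the paper: both arguments hinge on the positivity of $S(\vY_k)=Q_{1,k}+Q_{k,n}$ (the paper's $\lambda_k$) and the rescaling bijection $m'_k = m_k\,S(\vY_k)/S(\vx)$ between positive solutions in the cone and convex combinations in the slice $X_1$. Your presentation via the linear functional $S$ is arguably cleaner; the only point the paper makes explicit that you leave implicit is the homogeneity $p(\vY_k(\lambda\vx))=p(\vY_k(\vx))$, which ensures the projected vectors are unchanged when $\vx$ is replaced by $p(\vx)$.
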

\begin{proof}
As we have seen, $p$ is well defined on $\vx$ (since all $x_j$ are positive) and 
on  all $\vY_k$. 
If 
\(
\vx = \sum_{k=1}^n m_k \vY_k(\vx) 
\)
then by homogeneity if we let $\lambda_0=x_1+\ldots+x_{n-1}$ 
and $\lambda_k = 
Q_{1,k} + Q_{k,n} > 0$ for each $k$, 
\[
\begin{aligned}
\sum_{k=1}^n m'_k p( \vY_k(\lambda_0^{-1} \vx) )  & = 
\sum_{k=1}^n m'_k p( \vY_k(\vx) )  = 
\sum_{k=1}^n m'_k \lambda_k^{-1} \vY_k(\vx)   \\
\implies
\sum_{k=1}^n m'_k p( \vY_k(\lambda_0^{-1} \vx) )  & =  p(\vx) = \lambda_0^{-1}\vx \iff
m'_k \lambda_k^{-1} \lambda_0 = m_k.
\end{aligned}
\]
Now, if $\vx$ and all $\vY_k$ belong to $X_1$, 
\[
1 = \sum_{j=1}^{n-1} x_j = 
\sum_{j=1}^{n-1}\sum_{k=1}^n  m_k Y_{jk}  = 
\sum_{k=1}^n  
m_k \sum_{j=1}^{n-1}
 Y_{jk}  = 
 \sum_{k=1}^n m_k .
\]
\end{proof}

We can summarize the above facts in the following theorem. 
\begin{theo}
\label{theo:mainpositive}
Let $f\from \Delta^{n-2} \multimap X_1$ the multi-valued map defined as follows:
$f(\vx)=\CH[\vY_1(\vx),\ldots, \vY_n(\vx)] $ is 
the convex hull of the $n$ points $\vY_1,\ldots, \vY_n$ in $X_1$. 
Then $\vx \in f(\vx)$ if and only if any corresponding configuration 
$\vq$ solves the inverse central configuration problem.
\end{theo}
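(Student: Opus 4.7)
The plan is simply to chain together the reformulations from the preceding lemmas. The string of equivalences to be threaded is
\[
\text{inverse problem solvable} \iff \eqref{eq:CC3} \iff \eqref{eq:main1d} \iff \eqref{eq:round1} \iff \eqref{eq:round3} \iff \eqref{eq:round4} \iff \vx\in f(\vx),
\]
so no new inequality or estimate is required; only the last link has not already been spelled out.

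First I would unpack $\vx\in f(\vx)$ by the definition of convex hull: there exist coefficients $m'_1,\ldots,m'_n\ge 0$ with $\sum_k m'_k=1$ such that, after centrally projecting the $\vY_k(\vx)$ into $X_1$ via $p$,
\[
\vx \;=\; \sum_{k=1}^n m'_k\, p(\vY_k(\vx)),
\]
which is precisely equation \eqref{eq:round4}. Lemma~\ref{lemma:sumYikpositive} then rescales this to the un-projected equation \eqref{eq:round3}, $\vx=\sum_k m_k\vY_k(\vx)$ with $m_k>0$, through the positive factors $\lambda_k=Q_{1k}+Q_{kn}$ and $\lambda_0=\sum_i x_i$. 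Reading \eqref{eq:round3} componentwise returns \eqref{eq:round2}, and inverting the coordinate change $\vq=B\vx$ of lemma~\ref{lemma:betas2} turns it into \eqref{eq:round1}, i.e.\ the matrix form \eqref{eq:main1d} of the inverse-problem equation \eqref{eq:CC3} specialized to the collinear case $d=1$. A final application of lemma~\ref{eq:CC3-CC2} identifies positive solutions of \eqref{eq:CC3} with positive-mass solutions of \eqref{eq:CC2}, i.e.\ central configurations. Running the whole chain backwards yields the converse implication.

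The one point requiring care, and which I expect to be the only real obstacle, is the distinction between membership in the closed convex hull and in its relative interior: $\CH[\vY_1,\ldots,\vY_n]$ contains its boundary, and a boundary point corresponds to a combination in which some $m'_k$ vanish, whereas the inverse problem as formulated earlier demands $m_k>0$ for every~$k$. I would resolve this either by restricting the conclusion to the relative interior of $f(\vx)$, or by interpreting ``solves the inverse central configuration problem'' broadly enough to admit degenerate solutions in which some bodies carry zero mass, corresponding to the faces of $f(\vx)$. With that convention fixed, the remainder of the proof is routine bookkeeping composing the chain of equivalences above.
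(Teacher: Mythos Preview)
Your approach is correct and matches the paper's intent: the paper presents this theorem without proof, introducing it with ``We can summarize the above facts in the following theorem,'' so the intended argument is precisely the chain of equivalences through lemmas~\ref{eq:CC3-CC2}, \ref{lemma:betas2}, and~\ref{lemma:sumYikpositive} that you have spelled out. Your observation about the closed convex hull versus its relative interior is a genuine subtlety that the paper glosses over; the paper's equations \eqref{eq:round3} and \eqref{eq:round4} both stipulate $m_k>0$, so strictly speaking the equivalence is with the relative interior of $f(\vx)$, and your caveat is well taken.
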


\begin{example}
The case $n=3$ as expected is rather simple: given that $x_1+x_2=1$, the matrix $Y$ turns
out to be 
\[
\begin{bmatrix}
x_1^{-\alpha-1} &         
x_1^{-\alpha-1} & 
1 - x_2^{-\alpha-1}
\\
1 -x_1^{-\alpha-1} 
& 
x_2^{-\alpha-1} & 
x_2^{-\alpha-1}
\\
\end{bmatrix},
\]
and the projections on  $p(\vY_k)$ on $X_1$ are the columns of the following matrix
\[
\begin{bmatrix}
x_1^{-\alpha-1} &         
\dfrac{x_1^{-\alpha-1}}{x_1^{-\alpha-1} + x_2^{-\alpha-1}}  & 
1 - x_2^{-\alpha-1}
\\
1 -x_1^{-\alpha-1} 
& 
\dfrac{x_2^{-\alpha-1}}{x_1^{-\alpha-1} + x_2^{-\alpha-1}}  & 
x_2^{-\alpha-1}
\\
\end{bmatrix}~.
\]
Given that for each $x_1\in (0,1)$ 
\[
1-x_2^{-\alpha-1} < 0<  x_1 < 1 < x_1^{-\alpha-1},
\]
for each $\vx=(x_1,x_2)\in \Delta^{1}$ 
  one has 
  $\vx \in \CH[\vY_1,\vY_3] \subset f(\vx)$,
  and hence there are positive masses solving the inverse central configuration problem. 
\end{example}

\begin{example}
Consider the case $n=4$, and $\alpha>0$. The matrix $Y$,
given that $x_1+x_2+x_3=1$, 
\[\scriptsize
\begin{aligned}
Y & = 
\begin{bmatrix}
Q_{11} - Q_{21} & Q_{12} - Q_{22} & Q_{13}-Q_{23} & Q_{14}-Q_{24} \\
Q_{21} - Q_{31} & Q_{22} - Q_{32} & Q_{23}-Q_{33} & Q_{24}-Q_{34} \\
Q_{31} - Q_{41} & Q_{32} - Q_{42} & Q_{33}-Q_{43} & Q_{34}-Q_{44} \\
\end{bmatrix}\\
&= 
\begin{bmatrix}
x_1^{-\alpha-1} & x_{1}^{-\alpha-1}  & (x_1+x_2)^{-\alpha-1} - x_2^{-\alpha-1} & 1 -(x_2+x_3)^{-\alpha-1} \\
-x_1^{-\alpha-1} + (x_1+x_2)^{-\alpha-1}  & 
x_2^{-\alpha-1}  & x_2^{-\alpha-1}  & (x_2+x_3)^{-\alpha-1} -x_3^{-\alpha-1} \\
1 - (x_1+x_2)^{-\alpha-1}   & -x_2^{-\alpha-1} + (x_2+x_3)^{-\alpha-1}  & x_3^{-\alpha-1} & x_3^{-\alpha-1} \\
\end{bmatrix}\\
\end{aligned}
\]
The projections on $X_1$ are 
\[
p(\vY_1) = \vY_1, \quad p(\vY_4) = \vY_4
\]
and
\[
p(\vY_2) = \frac{\vY_2 }{ x_1^{-\alpha-1} +(x_2+x_3)^{-\alpha-1}}, \quad
p(\vY_3) = \frac{\vY_3 }{ x_3^{-\alpha-1} +(x_1+x_2)^{-\alpha-1}}.
\]
Note that the second components of $p(\vY_1)$ and $p(\vY_4)$ are negative: 
\[
-x_1^{-\alpha-1} + (x_1+x_2)^{-\alpha-1} < 0, \quad
 (x_2+x_3)^{-\alpha-1} -x_3^{-\alpha-1} < 0 .
\]
The second components of $p(\vY_2)$ and $p(\vY_3)$ are 
\[
 \dfrac{ x_2^{-\alpha-1} } { x_1^{-\alpha-1} +(x_2+x_3)^{-\alpha-1} } \quad \text{ and } 
 \dfrac{ x_2^{-\alpha-1} } { x_3^{-\alpha-1} +(x_1+x_2)^{-\alpha-1} }.
\]
If $x_2>\frac{1}{2}$, then $x_2^{-\alpha-1} < 2^{\alpha+1}$; since $x_1+x_2+x_3=1$, 
and by convexity
\[
\begin{aligned}
 x_1^{-\alpha-1} +(x_2+x_3)^{-\alpha-1} & = x_1^{-\alpha-1} + (1-x_1)^{-\alpha-1} > 2^{\alpha+2}  \\
 x_3^{-\alpha-1} +(x_1+x_2)^{-\alpha-1} & = x_3^{-\alpha-1} + (1-x_3)^{-\alpha-1} > 2^{\alpha+2}.  \\
 \end{aligned}
\]
Hence, if $x_2>1/2$ the second components of $p(\vY_2)$ and $p(\vY_3)$
satisfy the inequalities
\[
\begin{aligned}
 \dfrac{ x_2^{-\alpha-1} } { x_1^{-\alpha-1} +(x_2+x_3)^{-\alpha-1} } & < 
 \dfrac{2^{\alpha+1}}{2^{\alpha+2}}  = 2^{-1} \\ 
 \dfrac{ x_2^{-\alpha-1} } { x_3^{-\alpha-1} +(x_1+x_2)^{-\alpha-1} } & < 2^{-1}
\end{aligned}
\]
But this means that for any $\vx$ with $x_2>1/2$, the second components of $p(\vY_k)$ is smaller
than $1/2$ for each $k$, and hence 
$\vx \not\in \CH[\vY_1,\vY_2,\vY_3,\vY_4]$: the
inverse problem does not have solutions in this region. 
For $\alpha=1$, a plot of the region where the inverse problem has solutions is 
represented in figure \ref{fig:main}. The four simplices are represented in figure \ref{fig:four}. 
The plane $x_1+x_2+x_3=1$ is projected to the $x_1x_2$-plane.
The symmetry $(x_1,x_2,x_3) \mapsto (x_3,x_2,x_1)$, which comes from the 
symmetry $(q_1,\ldots, q_n) \mapsto (-q_n,\ldots, -q_1)$ %
is projected to the affine reflection $(x_1,x_2) \mapsto (1-x_1-x_2,x_2)$. 

Note that if $x_1>\frac{1}{2}$, then ($x_2+x_3<1/2$ $\implies$ $(x_2+x_3)^{-\alpha-1} >2^{\alpha+1}$)
the following inequalities hold true:
\begin{equation}\label{eq:inequals}
\begin{aligned}
(x_1+x_2)^{-\alpha-1} & < x_1^{-\alpha}   \\
(x_2+x_3)^{-\alpha-1} & < x_3^{-\alpha}   \\
(x_2+x_3)^{-\alpha-1} & > 1 \\
x_1^{-\alpha-1} - (x_2+x_3)^{-\alpha-1} & = x_1^{-\alpha-1} -(1-x_1)^{-\alpha-1} < 0  \\
\end{aligned}
\end{equation}

Now write the projections $p(\vY_1), p(\vY_2), p(\vY_4)$ in barycentric coordinates
with respect to the affine frame $P_1'=(1,0,0)$, $P_2'=(1/2,1/2,0)$, $P_3'=(1/2,0,1/2)$ in $X_1$: 
\[
\begin{aligned}
p(\vY_1) & = \vY_1  =  
(2 x_1^{-\alpha-1} - 1) 
\begin{bmatrix}1\\0\\0 \end{bmatrix}  + 
2 ((x_1+x_2)^{-\alpha-1} - x_1^{-\alpha-1} ) 
\begin{bmatrix}0\\1\\0 \end{bmatrix}  +  \\& + 
2(1-(x_2+x_3)^{-\alpha-1}) 
\begin{bmatrix}0\\0\\1 \end{bmatrix} ;  
\\
p(\vY_4) & = \vY_4  =  
(1- 2 (x_2+x_3)^{-\alpha-1} ) 
\begin{bmatrix}1\\0\\0 \end{bmatrix}  + \\& + 
2 ((x_2+x_3)^{-\alpha-1} - x_3^{-\alpha-1} ) 
\begin{bmatrix}0\\1\\0 \end{bmatrix}  + 
2(x_3^{-\alpha-1}) 
\begin{bmatrix}0\\0\\1 \end{bmatrix}  ;
\\
\lambda 
p(\vY_2) & = \vY_2
 =  
(x_1^{-\alpha-1} - (x_2+x_3)^{-\alpha-1} ) 
\begin{bmatrix}1\\0\\0 \end{bmatrix}  + \\&+ 
2 (x_2^{-\alpha-1}) 
\begin{bmatrix}0\\1\\0 \end{bmatrix}  + 
2(-x_2^{-\alpha-1} + (x_2+x_3)^{-\alpha-1}) 
\begin{bmatrix}0\\0\\1 \end{bmatrix}  ;
\\
\end{aligned}
\]
where $\lambda= {x_2^{-\alpha-1} + (x_2+x_3)^{-\alpha-1} }  > 0$. 

Now, by inequalities  \eqref{eq:inequals}, the signs of the barycentric coordinates 
are $\vY_1 \mapsto (+,-,-)$, $(\vY_2 \mapsto (-,+,-)$, $\vY_4 \mapsto  (-,-,+)$, 
and hence the $2$-simplex $\sigma$ with vertices $P'_1$, $P'_2$ and $P'_3$ is contained in
$\CH[\vY_1,\vY_2,\vY_4]$ for each $\vx\in \sigma$, which means that 
the inverse problem has solutions.  

In fact, consider the $3\times 3$ matrix whose columns are the coordinates of 
$p\vY_1,p\vY2,p\vY_4$. It is of type
\[
A = 
\begin{bmatrix}
a_{11} & -a_{12} & -a_{13} \\
-a_{21} & a_{22} & -a_{23} \\
-a_{31} & -a_{32} & a_{33} \\
\end{bmatrix} 
\]
where the sum of the columns are $1$. 
Hence, if $D$ is the matrix with diagonal $(a_{11},a_{22},a_{33})$, 
$A=(I-\tilde A) D$, where $\tilde A$ is 
\[
\tilde A = 
\begin{bmatrix}
0 & b_{12} & b_{13} \\
b_{21} & 0  & b_{23} \\
b_{31} & b_{32} & 0  \\
\end{bmatrix} 
\]
with all $b_{ij}>$ and the sum of the columns are $<1$. 
Therefore $A^{-1} = D^{-1} \sum_{k=0}^\infty \tilde A^k$ as convergent $\norm{\tilde A}_1 <1$, 
with all entries positive. 
This implies that for each $\vx$ in the vertices of the 
triangle $x_1>1/2$ are in the interior of the $2$-simplex
$\CH[p\vY_1,p\vY_2,p\vY_4]$ (because their barycentric coordinates are proportional
to the columns of $A^{-1}$). 
\end{example}

\begin{remark}
Because of the homogeneity, one can use the following procedure to check if $\vx \in \CH[p\vY_1,\ldots, p\vY_n]$:
for each $j=1\ldots n$, compute the inverse $C_j^{-1}$  of the square matrix $C_j$ of order $n-1$ 
obtained by removing the first row and the $j$-th column of the matrix $B^{-1}Q$ (written
in terms of coordinates $x_i$). Then $\vx\in X_1$ satisfy 
$\vx \in \CH[p\vY_1,\ldots, \widehat{p\vY_k}, \ldots, p\vY_n]$ (with the $k$-th
entry removed) if and only if the vector
$C_j^{-1} \vx$ has all $n-1$ positive components,
which correspond to multiples of barycentric coordinates of $\vx$ 
with respect to the vertices 
in $\CH[p\vY_1,\ldots, \widehat{p\vY_k}, \ldots, p\vY_n]$.
\end{remark}

\begin{figure}
\centering
\includegraphics[width=0.24\textwidth]{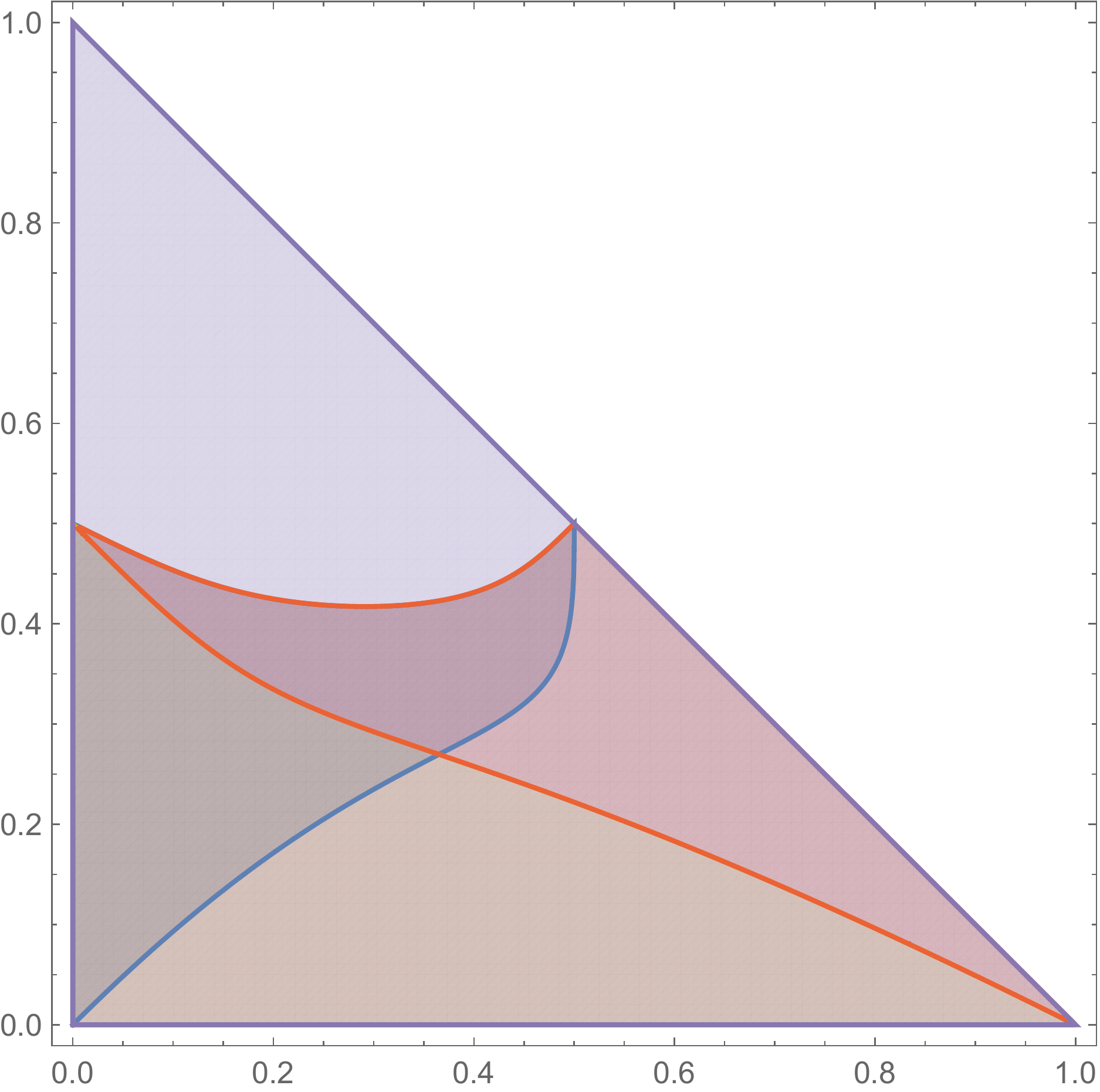}
\caption{The region of $X_1$ where $\vx \in f(\vx)$: $\vq \in [\vY_2,\vY_3,\vY_4] \cup [\vY_1,\vY_3,\vY_4]
 \cup [\vY_1,\vY_2,\vY_4] \cup  [\vY_1,\vY_2,\vY_3]$ }
\label{fig:main}
\end{figure}

\begin{figure}
    \centering
    \begin{subfigure}[t]{0.24\textwidth}
        \centering
        \includegraphics[width=\textwidth]{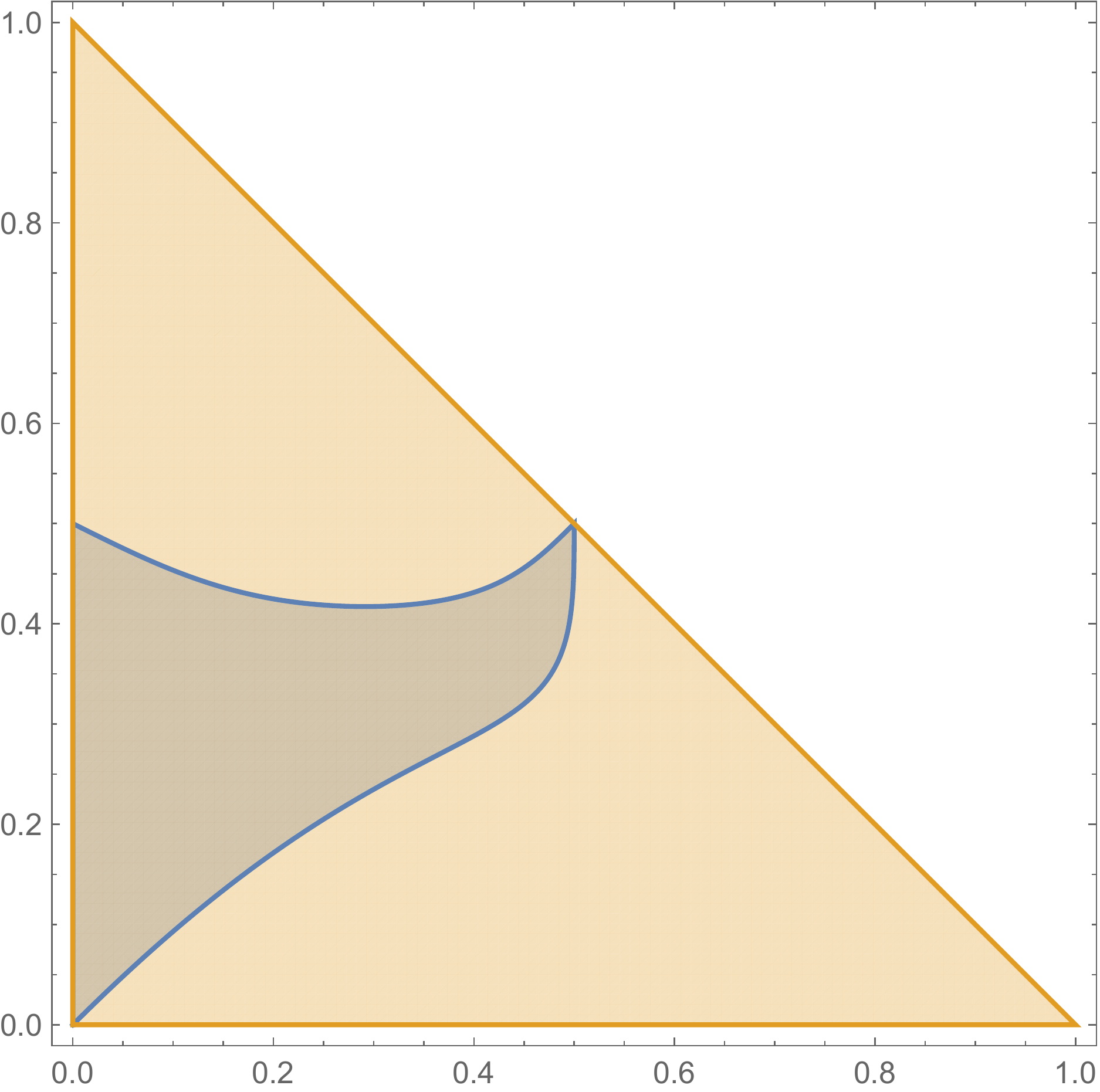}
        \caption{$\vq \in \CH[\vY_2,\vY_3,\vY_4]$}
    \end{subfigure}%
    ~ 
    \begin{subfigure}[t]{0.24\textwidth}
        \centering
        \includegraphics[width=\textwidth]{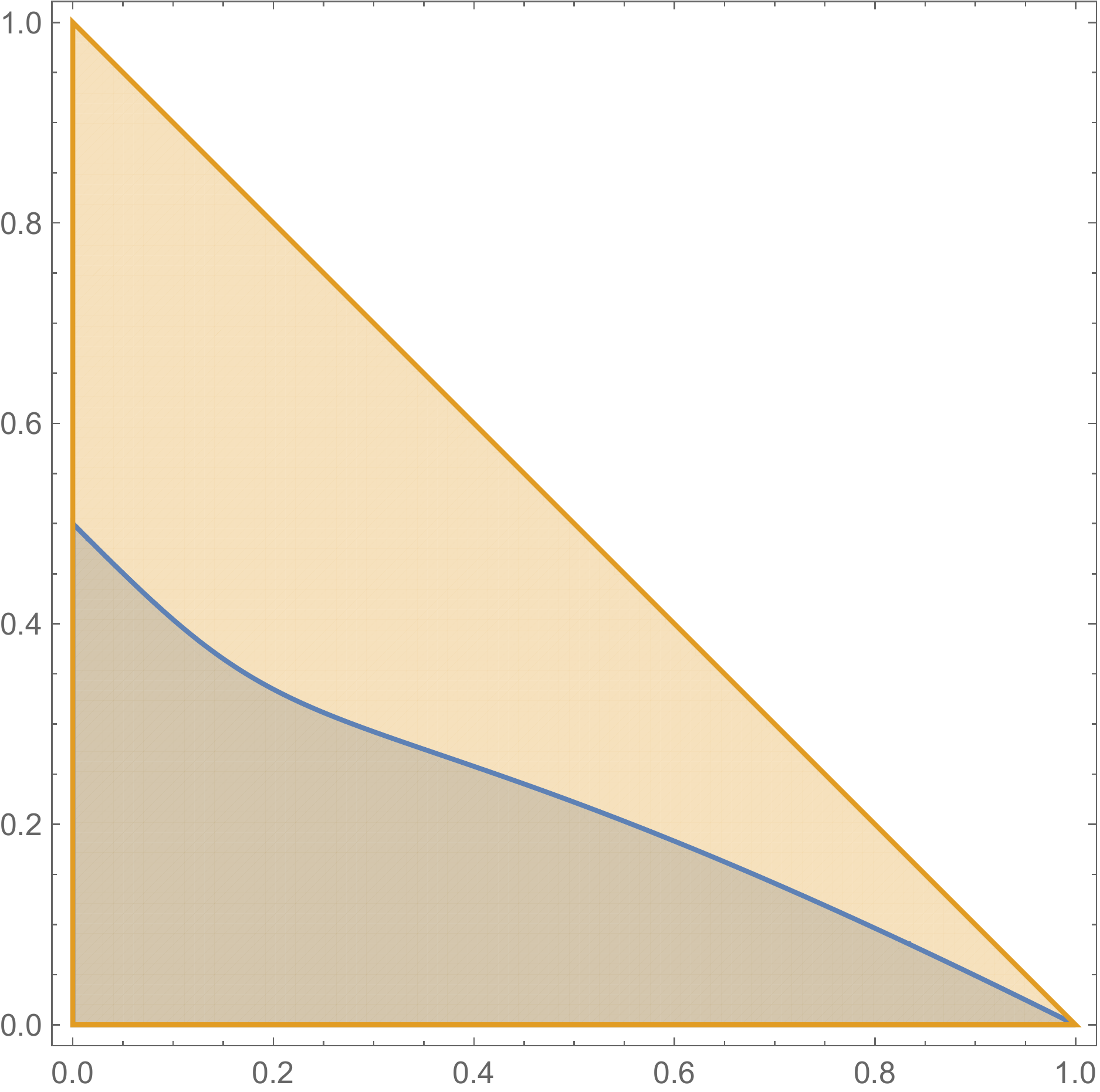}
        \caption{$\vq\in \CH[\vY_1,\vY_3,\vY_4]$}
    \end{subfigure}%
    ~ 
    \begin{subfigure}[t]{0.24\textwidth}
        \centering
        \includegraphics[width=\textwidth]{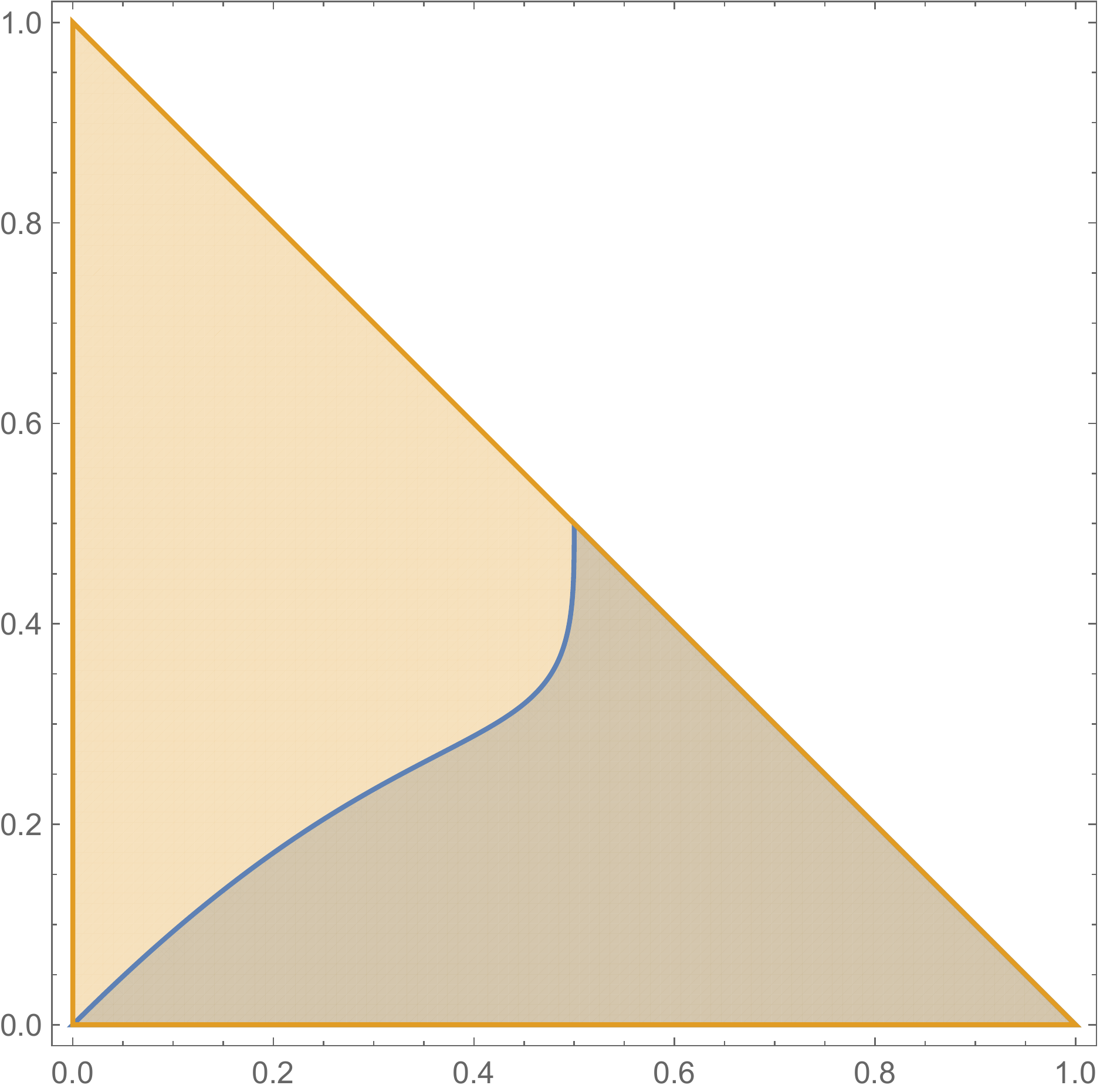}
        \caption{$\vq\in \CH[\vY_1,\vY_2,\vY_4]$}
    \end{subfigure}%
    ~ 
    \begin{subfigure}[t]{0.24\textwidth}
        \centering
        \includegraphics[width=\textwidth]{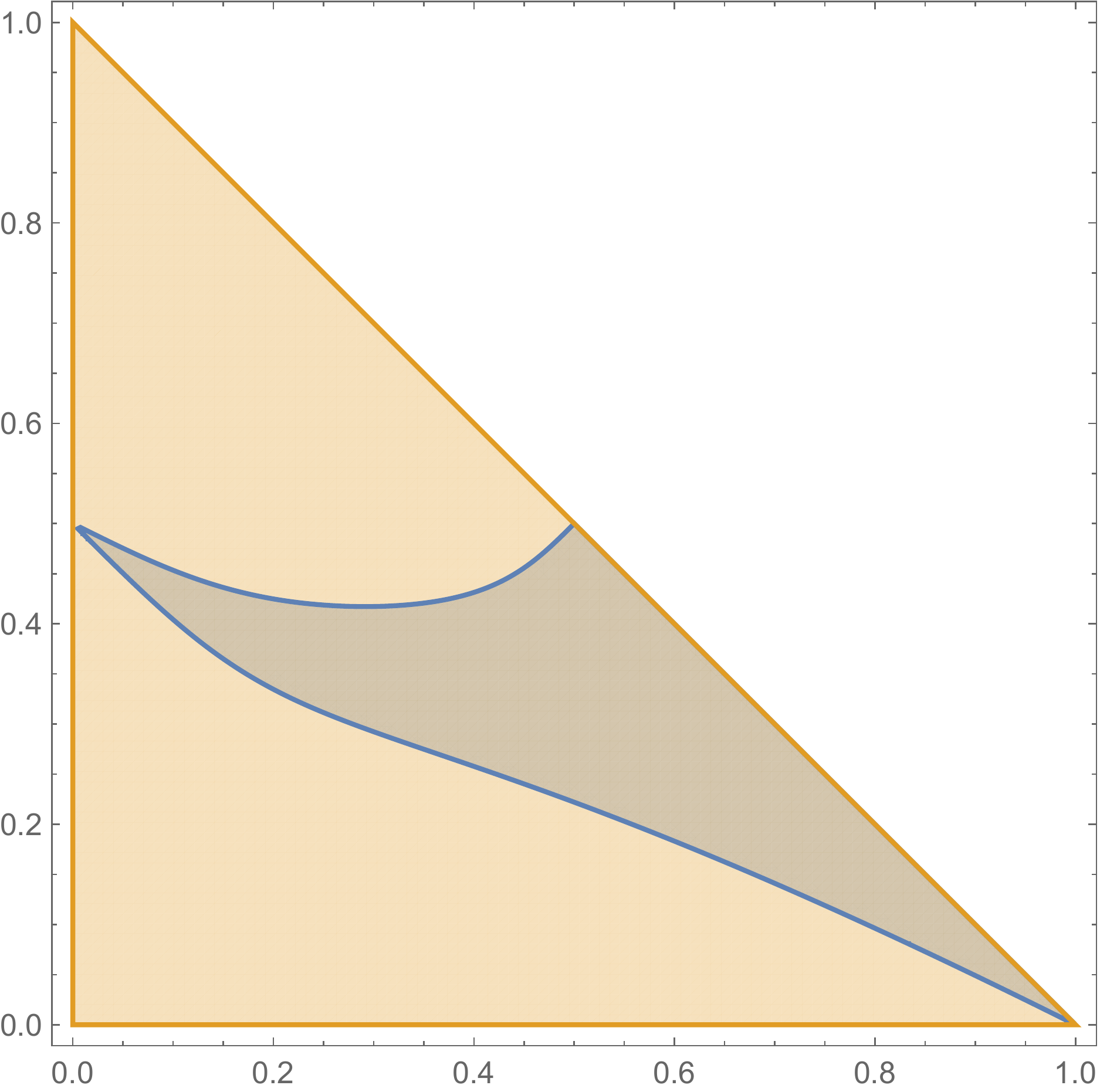}
        \caption{$\vq\in \CH[\vY_1,\vY_2,\vY_3]$}
    \end{subfigure}%
    \caption{The four regions covered by the four $2$-simplices of $\CH[\vY_1,\vY_2,\vY_3,\vY_4]$}
    \label{fig:four}
\end{figure}

\begin{theo}\label{theo:main}
Let $\vq \in \conf{n}{\RR}$ be a collinear configuration such that 
$q_1>q_2>\ldots> q_n$. 
If for an index $j$ with $2\leq j \leq n-2$  the inequality 
$2(q_j-q_{j+1}) > q_1 - q_n$ holds true, then 
the inverse problem does not 
have solutions for this configuration $\vq$: 
no positive masses $m_j$ exist such that 
$\vq$ is a central configuration with respect to the masses $m_j$.
\end{theo}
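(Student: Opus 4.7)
The plan is to extend the $n=4$ calculation of the Example above, by focusing on the $j$-th coordinate of the reformulated inverse problem. By Lemma \ref{lemma:sumYikpositive} and Theorem \ref{theo:mainpositive}, the existence of positive masses realising $\vq$ as a central configuration is equivalent to the existence of coefficients $m'_1,\ldots,m'_n>0$ with $\sum_k m'_k=1$ such that the projected point $p(\vx)\in X_1$ satisfies $p(\vx)=\sum_{k=1}^n m'_k\,p(\vY_k)$. I will show that the $j$-th coordinate of any such combination is strictly less than $1/2$, whereas the hypothesis translates exactly into $(p(\vx))_j>1/2$.

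The first step is a sign analysis of $Y_{jk}=Q_{jk}-Q_{j+1,k}$ for $k=1,\ldots,n$. Using only that $t\mapsto t^{-\alpha-1}$ is strictly decreasing on $(0,\infty)$, a short case split on the relative position of $k$ with respect to $\{j,j+1\}$ gives $Y_{jk}>0$ exactly for $k\in\{j,j+1\}$ and $Y_{jk}<0$ otherwise. Since the denominator $\sum_i Y_{ik}=Q_{1,k}+Q_{k,n}$ appearing in $p(\vY_k)=\vY_k/(Q_{1,k}+Q_{k,n})$ is strictly positive, the $j$-th component of $p(\vY_k)$ is strictly negative for every $k\notin\{j,j+1\}$.

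The second step estimates the two exceptional entries. The range $2\leq j\leq n-2$ is exactly what makes each of $q_1-q_j$, $q_j-q_n$, $q_1-q_{j+1}$, $q_{j+1}-q_n$ strictly positive, and both of the pairs $(q_1-q_j,q_j-q_n)$ and $(q_1-q_{j+1},q_{j+1}-q_n)$ sum to $L:=q_1-q_n$. Dividing by $L$ and using homogeneity, with $u,v,u',v'$ the four rescaled pairs and $w:=(q_j-q_{j+1})/L$, I obtain
\[
p(\vY_j)_j=\frac{w^{-\alpha-1}}{u^{-\alpha-1}+v^{-\alpha-1}},\qquad p(\vY_{j+1})_j=\frac{w^{-\alpha-1}}{(u')^{-\alpha-1}+(v')^{-\alpha-1}}.
\]
The hypothesis reads $w>1/2$, so $w^{-\alpha-1}<2^{\alpha+1}$. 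Strict convexity of $t\mapsto t^{-\alpha-1}$ on $(0,\infty)$ together with $u+v=u'+v'=1$ yields, via Jensen's inequality, $u^{-\alpha-1}+v^{-\alpha-1}\geq 2^{\alpha+2}$ and the analogous bound for $(u',v')$. Hence both $p(\vY_j)_j$ and $p(\vY_{j+1})_j$ are strictly less than $1/2$.

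Combining the two steps, in any admissible convex combination
\[
(p(\vx))_j=\sum_{k=1}^n m'_k\,p(\vY_k)_j\leq m'_j\,p(\vY_j)_j+m'_{j+1}\,p(\vY_{j+1})_j<\tfrac12\bigl(m'_j+m'_{j+1}\bigr)\leq\tfrac12,
\]
contradicting the hypothesis $2(q_j-q_{j+1})>q_1-q_n$, which is exactly $(p(\vx))_j>1/2$ in the coordinates of $X_1$. The only non-routine ingredient beyond the reformulation of Theorem \ref{theo:mainpositive} is the convexity estimate, which is elementary and sharp (equality in Jensen's inequality would force $u=v=1/2$), so I foresee no real obstacle.
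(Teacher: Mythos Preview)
Your argument is correct and follows essentially the same route as the paper's own proof: a sign analysis showing $p(\vY_k)_j<0$ for $k\notin\{j,j+1\}$, the convexity bound $u^{-\alpha-1}+(1-u)^{-\alpha-1}\geq 2^{\alpha+2}$ for the two remaining denominators, and the numerator bound $x_j^{-\alpha-1}<2^{\alpha+1}$ from $x_j>1/2$, yielding $p(\vY_k)_j<1/2$ for all $k$ and hence $\vx\notin\CH[p\vY_1,\ldots,p\vY_n]$. Your presentation is in fact slightly cleaner, since you state the sharp Jensen bound $2^{\alpha+2}$ explicitly (the paper's proof writes $2^{\alpha+1}$ at that point, though its $n=4$ example has the correct exponent).
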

\begin{proof}
The assertion follows if we prove that if for some $i$ such that $2\leq i \leq n-2$ 
the inequality $x_i> 1/2 $ holds for the point $\vx \in X_1$ 
defined with coordinates $x_i = \dfrac{q_i-q_n}{q_1-q_n}$, then $\vx$  does not belong 
to $\CH[p\vY_1,\ldots, p\vY_n]$. 
In fact, consider the matrix $\bar Y$ with columns the vectors $p\vY_k$: 
its coefficients are, for $j=1,\ldots, n-1$ and $k=1,\ldots, n$, 
\[
\bar Y_{jk} = \dfrac{ Q_{j,k} - Q_{j+1,k}}{Q_{1k}+Q_{kn}}
\]
If $x_i > \frac{1}{2}$, for some $2\leq i\leq n-2$,
then consider the terms
$Y_{ik}$:
if $k\in \{i,i+1\}$, then 
 $Q_{1k} + Q_{kn} = (x_1+\ldots+x_{k-1})^{-\alpha-1} + (x_k + \ldots + x_n)^{-\alpha-1} > 2 ^{\alpha+1}$
 by convexity, 
 and   $Q_{i,i+1}= x_i^{-\alpha-1} < 2 ^ {\alpha+1} $ by monotonicity; 
hence the following inequalities hold
\[
Y_{ik} = \dfrac{Q_{ik} - Q_{i+1,k}}{Q_{1k}+Q_{kn}} = 
\begin{cases}
\dfrac{ -Q_{ki} + Q_{k,i+1}}{Q_{1k}+Q_{kn}} < 0< \frac{1}{2}   & \text{ if } k<  i \\
\dfrac{Q_{i,i+1}}{Q_{1i}+Q_{in}} < \frac{1}{2}  & \text{ if } k=i \\
\dfrac{Q_{i,i+1}}{Q_{1,i+1}+Q_{i+1,n}}  < \frac{1}{2}  & \text{ if } k=i+1 \\
\dfrac{Q_{ik}-Q_{i+1,k} }{Q_{1k}+Q_{kn}} < 0 < \frac{1}{2}   & \text{ if } k>i+1. \\
\end{cases}
\]
Since all the $i$-th coordinates of the points $p\vY_k$ are less than $\frac{1}{2}$,
while $x_i>\frac{1}{2}$, the point $\vx$ does not belong to 
  $\CH[p\vY_1,\ldots, p\vY_n]$. 
\end{proof}

\end{document}